\documentclass[11pt]{amsart}
\newtheorem{theorem}{Theorem}

\newtheorem{corollary}[theorem]{Corollary}

\newtheorem{lemma}[theorem]{Lemma}

\newtheorem{problem}[theorem]{Problem}
\newtheorem{proposition}[theorem]{Proposition}
\newtheorem{remark}[theorem]{Remark}


\usepackage[active]{srcltx} 


\def\E{{\widehat{E}}}

\def\J#1#2#3{ \left\{ #1,#2,#3 \right\} }
\def\RR{{\mathbb{R}}}
\def\NN{{\mathbb{N}}}
\def\11{\textbf{$1$}}
\def\CC{{\mathbb{C}}}

\usepackage{enumerate}
\usepackage{amsmath}
\usepackage{amssymb}

\begin{document}

\title[Automatic continuity of derivations]{Automatic continuity of derivations on
C$^*$-algebras and JB$^*$-triples}

\author[Peralta]{Antonio M. Peralta}
\email{aperalta@ugr.es}
\address{Departamento de An{\'a}lisis Matem{\'a}tico, Facultad de
Ciencias, Universidad de Granada, 18071 Granada, Spain.}

\author[Russo]{Bernard Russo}
\email{brusso@uci.edu}
\address{Department of Mathematics, UC Irvine, Irvine CA, USA}

\thanks{First author partially supported by
D.G.I. project no. MTM2008-02186, and Junta de Andaluc\'{\i}a
grants FQM0199 and FQM3737.}

\date{}

\begin{abstract}
We introduce the notion of a Jordan triple module and determine the
precise conditions under which every derivation from a
JB$^*$-triple $E$ into a Banach (Jordan) triple $E$-module is
continuous. In particular, every derivation from a real or complex
JB$^*$-triple into its dual space is automatically continuous.
Among the consequences, we prove that every triple derivation from
a C$^*$-algebra $A$ to a Banach triple $A$-module is continuous.
In particular, every Jordan derivation from $A$ to a Banach
$A$-bimodule is a derivation, a result which complements
a classical theorem due to B.E. Johnson and solves a problem
which has remained open for over ten years.
\end{abstract}

\maketitle
 \thispagestyle{empty}

\section{Introduction}

Results on automatic continuity of linear operators defined on
Banach algebras comprise a fruitful area of research intensively
developed during the last sixty years. The monographs
\cite{Sin76}, \cite{Dales78} and \cite{Dales00} review most of the
main achievements obtained during the last fifty years. In the words
of A.M. Sinclair (see \cite[Introduction]{Sin76}), ``the
continuity of a multiplicative linear functional on a unital
Banach algebra is the seed from which these results on the
automatic continuity of homomorphisms grew''.\smallskip

A linear mapping $D$ from a Banach algebra $A$ to a Banach
$A$-bimodule is said to be a \emph{derivation} if $D( a b) = D(a)
b + a D(b)$, for every $a,b$ in $A$. The pioneering  work of W. G.
Bade and P. C. Curtis (see \cite{BaCur}) studies the automatic
continuity of a module homomorphism between bi-modules over
$C(K)$-spaces. Some techniques developed in the just quoted paper
were exploited by J.R. Ringrose to prove that every (associative)
derivation from a C$^*$-algebra $A$ to a Banach $A$-bimodule $M$
is continuous (compare \cite{Ringrose72}). The case in which $M=A$
was previously treated by S. Sakai \cite{Sak60} by way of spectral
theory in $A$ ($=M$).\smallskip

A {\it Jordan derivation} from a Banach algebra $A$ into a Banach
$A$-module is a linear map $D$ satisfying $D(a^2) = a D(a) + D(a)
a,$ ($a\in A$), or equivalently, $D(ab+ba)=aD(b)+D(b)a + D(a)b
+bD(a),$ ($a,b\in A$). Sinclair proved that a bounded Jordan
derivation from a semisimple Banach algebra to itself is a
derivation, although this result fails for derivations of
semisimple Banach algebras into a Banach bi-module \cite[Theorem
3.3]{Sinclair70}. Nevertheless, a celebrated result of B.E.
Johnson states that every bounded Jordan derivation from a
C$^*$-algebra $A$ to a Banach $A$-bimodule is an associative
derivation (cf. \cite{John96}).

In view of the intense interest in automatic continuity problems
in the past half century, it is natural to ask if the assumption
of boundedness is needed in Johnson's result.  It is therefore
somewhat surprising that the following problem has remained open
for fifteen years.

\begin{problem}\label{prob 1} Is every Jordan derivation from a C$^*$-algebra
$A$ to a Banach $A$-bimodule automatically continuous?
\end{problem}

This problem was already posed in \cite[Question 14.i]{Vill}.
According to \cite[\S 5]{Bre05}, Problem \ref{prob 1} ``is an
intriguing open question''. In 2004, J.
Alaminos\hyphenation{Alaminos}, M. Bre\v{s}ar and A.R. Villena
gave a positive answer to the above problem for some classes of
C$^*$-algebras including the class of von Neumann algebras and the
class of abelian C$^*$-algebras (cf. \cite{AlBreVill}).
In the setting of general C$^*$-algebras the question has remained open.
\smallskip

Problem~\ref{prob 1} has a natural generalization to the setting
of Banach Jordan algebras. In the category of JB$^*$-algebras, S.
Hejazian and A. Niknam established in \cite{HejNik96} that every
Jordan derivation from a JB$^*$-algebra $J$ into $J$ or into $J^*$
is automatically continuous. We recall that a linear mapping $D$
from a JB$^*$-algebra $J$ to a Jordan Banach $J$-bimodule is said
to be a \emph{Jordan derivation} if $D( a \circ b) = D(a)\circ b +
a \circ D(b)$, for every $a,b$ in $J$, where $\circ$ denotes the
Jordan product in $J$ and the action of $J$ on the Jordan
$J$-module (defined below).\smallskip

The above quoted paper actually contains a theorem which provides
necessary and sufficient conditions to guarantee that a Jordan
derivation from a JB$^*$-algebra $J$ into a Jordan Banach
$J$-module is continuous (cf. \cite[Theorem 2.2]{HejNik96}). The
same authors show the existence of discontinuous Jordan
derivations from JB$^*$-algebras into Jordan Banach modules
(compare \cite[\S 3]{HejNik96}). When the domain JB$^*$-algebra is
a commutative or a compact C$^*$-algebra $A$, the same authors
proved that every Jordan derivation from $A$ into a Jordan Banach
$A$-module is continuous (cf. \cite[Theorem 2.4 and Corollary
2.7]{HejNik96}). In the setting of general C$^*$-algebras,
however, the following question remains open (also for fifteen years).

\begin{problem}\label{prob 2} Is every Jordan derivation from a C$^*$-algebra
$A$ to a Jordan Banach module automatically continuous?
\end{problem}


Prior to the writing of this paper, it apparently had escaped the attention of functional analysts that  combining a theorem of Cuntz (\cite{Cuntz}, see Lemma~\ref{Thm Cuntz} below) with the theorems just quoted from \cite{AlBreVill} and \cite{HejNik96} concerning commutative C$^*$-algebras  yields  positive answers to both Problems \ref{prob 1} and \ref{prob 2}.  We therefore can now state the following theorem.

\begin{theorem}
Every Jordan derivation from a C$^*$-algebra $A$ to a Banach $A$-module or to a Jordan Banach module is continuous.
\end{theorem}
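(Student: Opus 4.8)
The plan is to reduce the general C$^*$-algebra case to the commutative case via the theorem of Cuntz advertised in the excerpt as Lemma~\ref{Thm Cuntz}. The key structural input is that a (possibly nonunital) C$^*$-algebra $A$ is linearly spanned by its set of commutative C$^*$-subalgebras; more precisely, by Cuntz's result $A$ is the closed linear span of (the images of) $*$-homomorphisms from the universal commutative C$^*$-algebra on appropriate generators, or equivalently every self-adjoint element lies in a maximal abelian C$^*$-subalgebra. So first I would fix a Jordan derivation $D\colon A\to M$, where $M$ is either a Banach $A$-bimodule or a Jordan Banach $A$-module, and observe that it suffices to prove $D$ is continuous on each commutative C$^*$-subalgebra $C\subseteq A$ with a \emph{uniform bound} on the restrictions; the Cuntz decomposition then patches these local bounds into a global bound.

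Second, for the restriction to a commutative C$^*$-subalgebra $C$ I would invoke the quoted results: by \cite{AlBreVill} every Jordan derivation from an abelian C$^*$-algebra into a Banach bimodule is continuous (handling Problem~\ref{prob 1}'s local version), and by \cite[Theorem 2.4, Corollary 2.7]{HejNik96} the same holds for Jordan Banach modules (the local version for Problem~\ref{prob 2}). Here one must be slightly careful: $M$ is a module over $A$, not over $C$, so one restricts the module action to make $M$ a $C$-module, which is routine. The output of this step is that each restriction $D|_C$ is continuous, but a priori with a norm depending on $C$.

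Third — and this is where the real work sits — I would upgrade ``continuous on each $C$'' to ``uniformly continuous on the family of all $C$,'' or more directly, deduce continuity of $D$ on all of $A$. The standard mechanism is the \emph{separating space}: let $\mathfrak{S}(D)=\{y\in M : \exists\, a_n\to 0 \text{ in } A,\ D(a_n)\to y\}$. One shows $\mathfrak{S}(D)$ is a closed submodule and, using the closed graph / stability-lemma machinery (Sinclair's lemma on continuity on an ideal, or the fact that a linear map is continuous iff its separating space is trivial together with a Baire-category argument over the commutative subalgebras covering $A$), that $D$ is continuous on a dense subalgebra, hence everywhere by closedness of $\mathfrak{S}(D)$. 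The main obstacle I anticipate is exactly the passage from pointwise/local continuity to global continuity: the commutative subalgebras do not form a nice directed system under a single norm control, so one cannot naively take a sup of the local norms. The cleanest route is probably to show directly that $\mathfrak{S}(D)=\{0\}$: if $a_n\to 0$ and $D(a_n)\to y\neq 0$, decompose each $a_n$ via Cuntz into a fixed finite number of pieces lying in commutative subalgebras on which $D$ is continuous, and derive a contradiction from the continuity of those finitely many restrictions.

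Finally, having obtained continuity, the promised corollary ``every Jordan derivation from $A$ to a Banach $A$-bimodule is an (associative) derivation'' follows immediately by combining the now-established automatic continuity with Johnson's theorem (\cite{John96}), which covers the \emph{bounded} case. I would close by remarking that the same scheme — Cuntz decomposition plus the commutative-case results of \cite{HejNik96} — simultaneously disposes of the Jordan-module statement, so no separate argument is needed for the two halves of the theorem.
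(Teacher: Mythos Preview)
Your overall strategy --- reduce to commutative C$^*$-subalgebras via Cuntz and then invoke the commutative-case results of \cite{AlBreVill} and \cite{HejNik96} --- is exactly the paper's approach; the paper states the theorem as an immediate consequence of precisely this combination, with no further argument.

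Where you go astray is in your ``third step.'' You misread what Cuntz's theorem (Lemma~\ref{Thm Cuntz}) actually delivers. It is not merely a decomposition of elements into commutative pieces requiring you to supply a uniform bound afterwards; the theorem already says that a linear map $T:A\to X$ is continuous \emph{as soon as} $T|_{C(h)}$ is continuous for every self-adjoint $h$, with no uniformity hypothesis. The passage from local to global continuity that you flag as ``where the real work sits'' is therefore entirely absorbed into Cuntz's result, and your separating-space argument is unnecessary. Worse, the specific mechanism you sketch --- decompose each $a_n$ into a ``fixed finite number of pieces lying in commutative subalgebras'' --- does not work as written: writing $a_n=h_n+ik_n$ puts the pieces in $C(h_n)$ and $C(k_n)$, which \emph{vary with $n$}, so continuity of each $D|_{C(h_n)}$ gives no contradiction to $\|D(a_n)\|\to\infty$. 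That obstacle is real, and it is exactly what Cuntz's theorem overcomes by a genuinely nontrivial argument internal to the C$^*$-algebra structure; you should cite it as a black box rather than try to reproduce it via separating spaces. Once you do, steps one and two already finish the proof.
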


As a consequence of our main results, we are able to treat both
Problems \ref{prob 1} and \ref{prob 2} from a new and more general
point of view. We introduce the class of Banach (Jordan) triple
modules, a class which includes, besides Banach modules over
Banach algebras and Banach Jordan modules over Banach Jordan
algebras, the dual space of every real or complex JB$^*$-triple.
In this setting, a conjugate linear (resp., linear) mapping
$\delta$ from a complex (resp., real) Jordan triple $E$ to a
triple $E$-module is called a \emph{derivation} if
\begin{equation}\label{eq:0409111}
\delta \{a,b,c\} = \J {\delta(a)}bc +\J a{\delta(b)}c + \J
ab{\delta(c)},\end{equation}
 for every $a,b,c\in E$.\smallskip

We determine (in Theorem~\ref{t main complex to module}) the
precise conditions in order that a derivation from a complex
JB$^*$-triple, $E$, into a Banach (Jordan) triple $E$-module is
continuous. We subsequently show that every derivation from a real
or complex JB$^*$-triple into its dual space is automatically
continuous, a fact which has significance for the forthcoming
study by the authors of weak amenability.\smallskip

From one point of view (another is through infinite dimensional
holomorphy) the theory of JB$^*$-triples may be viewed as parallel
to the theory of C$^*$-algebras. The analog of the theorem of
Sakai mentioned above, namely, the  automatic continuity of a
derivation from a JB$^*$-triple into itself, that is, a {\it linear} map satisfying the derivation property (\ref{eq:0409111}),  was proved by T. J.
Barton and Y. Friedman \cite{BarFri} in the complex case and
extended to the real case in \cite{HoMarPeRu}. Among the
consequences of our main results, we obtain a completely different
proof for the automatic continuity results obtained in the just
quoted papers \cite{BarFri} and \cite{HoMarPeRu}.\smallskip

We shall see that there exist examples of triple derivations from
a JB$^*$-triple $E$ to a Banach triple $E$-module which are not
continuous (see Remark \ref{r last}). In our last results we show
that these examples cannot appear when the domain is a
C$^*$-algebra. More concretely, in Theorem \ref{cor ternary
derivations from C*-algebras} and Corollaries \ref{cor Jordan
derivations from C*-algebras}, \ref{cor associative derivations
from C*-algebras}, and \ref{Ringrose} we prove that every triple
(resp., Jordan) derivation from a C$^*$-algebra $A$ to a Banach
triple $A$-module (resp., to a Jordan Banach $A$-module) is
automatically continuous, which constitute the solutions to
Problems 1 and 2 and a completely different proof of the automatic
continuity result of Ringrose quoted above.\smallskip


In section 2 of this paper we recall the definition and basic
properties of Jordan triples, define Jordan triple modules and
submodules, and introduce and study a basic tool in our paper: the
{\it quadratic annihilator} of a submodule. In section 3 we prove
the automatic continuity results by relating triple derivations to
triple module homomorphisms and using the well known technique of
separating spaces. The final section contains an analysis of the
automatic continuity of every triple derivation from a
C$^*$-algebra $A$ to a Banach triple $A$-module, which leads to a unified solution to
Problems \ref{prob 1} and \ref{prob 2}.\smallskip

Our definition of Jordan triple module is motivated by the theory
of modules over a Jordan algebra due to Jacobson \cite{Jac},
together with the definition in the special case of the dual of a
Banach Jordan triple, which was suggested by Tom Barton some time
ago. Subsequently, we noticed that Jordan triple modules were
defined in \cite{Loos1973} in a form more suitable to a purely
algebraic setting. Our definition is more suitable for the
applications to C$^*$-algebras.\smallskip

All of our results, excepting Theorem~\ref{t main complex to
module}, are valid for real or complex JB$^*$-triples. It should
be noted however that the key to the solutions to Problems 1 and 2
is that Theorem~\ref{t main complex to module} is valid for the
self-adjoint part of a C$^*$-algebra, considered as a (reduced)
real JB$^*$-triple (see Proposition~\ref{thm PeRu real}).

\section{Jordan triple Modules}

\subsection{Jordan triples}

A complex (resp., real) \emph{Jordan triple} is a complex (resp., real)
vector space $E$ equipped with a triple
product $$ E \times E \times E \rightarrow E$$
$$(xyz) \mapsto \J xyz $$
which is bilinear and symmetric in the outer variables and
conjugate linear (resp., linear) in the middle one satisfying the
so-called \emph{``Jordan Identity''}:
$$L(a,b) L(x,y) -  L(x,y) L(a,b) = L(L(a,b)x,y) - L(x,L(b,a)y),$$
for all $a,b,x,y$ in $E$, where $L(x,y) z := \J xyz$. When $E$ is
a normed space and the triple product of $E$ is continuous, we say
that $E$ is a \emph{normed Jordan triple}. If a normed Jordan
triple $E$  is complete with respect to the norm (i.e. if $E$ is a
Banach space), then it is called a
\emph{Jordan-Banach triple}.
Unless otherwise specified, the term ``normed Jordan triple''
(resp., ``Jordan-Banach triple'') will always mean a real or
complex normed Jordan triple (resp., a real or complex
Jordan-Banach triple).\smallskip


A summary of the basic facts about the important subclass of JB$^*$-triples (defined below), some of which are recalled here, can be
found in \cite{Russo94} and some of the references therein, such
as \cite{Ka},\cite{FriRu85},\cite{FriRus86bis},\cite{U1} and \cite{U2}.\smallskip

A subspace $F$ of a Jordan triple $E$ is said to be a
\emph{subtriple} if $\J FFF \subseteq F$. We recall that a
subspace $J$ of $E$ is said to be a \emph{triple ideal} if
$\{E,E,J\}+\{E,J,E\} \subseteq J.$ When $\J JEJ \subset J$ we say
that $J$ is an \emph{inner ideal} of $E$.\smallskip

We recall that a  real (resp., complex) \emph{Jordan algebra} is a
(not-necessarily associative) algebra over the real (resp.,
complex) field whose product is abelian and satisfies $(a \circ
b)\circ a^2 = a\circ (b \circ a^2)$. A normed Jordan algebra is a
Jordan algebra $A$ equipped with a norm, $\|.\|$, satisfying $\|
a\circ b\| \leq \|a\| \ \|b\|$, $a,b\in A$. A \emph{Jordan Banach
algebra} is a normed Jordan algebra whose norm is
complete.\smallskip

Every Jordan algebra is a Jordan triple with respect to $$\J abc := (a\circ b) \circ c + (c\circ b) \circ a - (a\circ c) \circ b.$$

Every real or complex associative Banach algebra (resp., Jordan
Banach algebra) is a real Jordan-Banach triple with respect to the
product $\J abc = \frac12 (a bc +cba)$ (resp., $\J abc = (a\circ
b) \circ c + (c\circ b) \circ a - (a\circ c) \circ b$).\smallskip

An element $e$ in a Jordan triple $E$ is called \emph{tripotent}
if $\J eee =e$. Each tripotent $e$ in $E$ induces two
decomposition of $E$ (called \emph{Peirce decompositions}) in the
form:
$$E=E_0(e)\oplus E_1(e)\oplus E_2(e)=E^1 (e) \oplus E^{-1} (e)
\oplus E^{0} (e) $$ where $E_k(e)=\{x\in E:L(e,e)x=\frac k2 x\}$
for $k=0,1,2$ and $E^{k} (e)$ is the $k$-eigenspace of the
operator $Q(e) x=\J exe$ for $k=1,-1,0$.  The projection onto $E_k(e)$, which is contractive, is denoted by $P_k(e)$ for $k=0,1,2$. The following
\emph{Peirce rules} are satisfied:
\begin{enumerate}[$(a)$]
\item $E_{2} (e) = E^1 (e) \oplus E^{-1} (e)$ and $E^{0} (e) =
E_{1} (e) \oplus E_{0} (e)$,\smallskip

\item $\{ E^{i} (e), E^{j} (e), E^{k} (e)\} \subseteq E^{i j k}
(e)$ if $i j k \neq 0$,\smallskip

\item  $\{E_{i_{}}(e),E_{j_{}}(e),E_{k_{}}(e)\}\subseteq
E_{i-j+k}(e),$ where $i,j,k=0,1,2$ and $E_{l_{}}(e)=0$ for $l\neq
0,1,2$,\smallskip

\item  $\{E_0(e),E_2(e),E\}=\{E_2(e),E_0(e),E\}=0.$\smallskip
\end{enumerate}

A JB$^*$-algebra is a complex Jordan Banach algebra $A$ equipped
with an algebra involution $^*$ satisfying  $\|\J a{a^*}a \|= \|a\|^3$, $a\in
A$.  (Recall that $\J a{a^*}a  =
 2 (a\circ a^*) \circ a - a^2 \circ a^*$).\smallskip

A \emph{(complex) JB$^*$-triple} is a complex Jordan Banach triple
${E}$ satisfying the following axioms: \begin{enumerate}[($JB^*
1$)] \item For each $a$ in ${E}$ the map $L(a,a)$ is an hermitian
operator on $E$ with non negative spectrum. \item  $\left\|
\{a,a,a\}\right\| =\left\| a\right\| ^3$ for all $a$ in ${A}.$
\end{enumerate}\smallskip

Every C$^*$-algebra (resp., every JB$^*$-algebra) is a JB$^*$-triple with respect to the product
$\J abc = \frac12 \ ( a b^* c + cb^* a) $ (resp., $\J abc := (a\circ b^*) \circ c + (c\circ b^*) \circ a - (a\circ c) \circ b^*$).\smallskip

We recall that a \emph{real JB$^*$-triple} is a norm-closed real
subtriple of a complex JB$^*$-triple (compare
\cite{IsKaRo95}). The class of real JB$^*$-triples includes all complex JB$^*$-triples,
all real and complex C$^*$- and JB$^*$-algebras and all JB-algebras.\smallskip

A complex (resp., real) \emph{JBW$^*$-triple} is a complex (resp.,
real) JB$^*$-triple which is also a dual Banach space (with a
unique isometric predual \cite{BarTi,MarPe}). It is known that the
triple product of a JBW$^*$-triple is separately weak$^*$
continuous (c.f. \cite{BarTi} and \cite{MarPe}). The second dual
of a JB$^*$-triple $E$ is a JBW$^*$-triple with a product
extending the product of $E$ \cite{Di86b,IsKaRo95}.\smallskip

It is also known that, for each tripotent $e$ in a complex
JB$^*$-triple $E$, $E_2 (e)$ is a JB$^*$-algebra with product and
involution given by $x\circ_{_e} y := \J xey$ and $x^{\sharp_{_e}}
:= \J exe$, respectively. In the case of $E$ being a real
JB$^*$-triple $E^{1} (e)$ is a JB-algebra with respect to the
product given in the above lines (JB-algebras are precisely the
self adjoint parts of JB$^*$-algebras \cite{Wright77}).\smallskip

A tripotent $e$ in a real or complex JB$^*$-triple $E$ is called
\emph{minimal} if $E^{1} (e) = \RR e$. In the complex setting this
is equivalent to say that $E_2 (e) = \CC e$, because $E^{-1} (e) =
i E^{1} (e)$, whereas in the real situation the dimensions of
$E^{1} (e)$ and $E^{-1} (e)$ need not be correlated.\smallskip

When $E$ is a JB$^*$-triple or a real JB$^*$-triple, a subtriple $I$
of $E$ is a triple ideal if and only if $\J EEI \subseteq I$ or $\J EIE \subseteq I$
 or $\J EII\subseteq I$ (compare \cite{Bun86}).\smallskip

\subsection{Jordan triple modules}
Let $A$ be an associative algebra. Let us recall that an
\emph{$A$-bimodule} is a vector space $X$, equipped with two
bilinear products $(a,x)\mapsto a x$ and $(a,x)\mapsto x a$ from
$A\times X$ to $X$ satisfying the following axioms: $$a (b x) = (a
b) x ,\ \ a (x b) = (a x) b, \hbox{ and, }(xa) b = x (a b),$$ for
every $a,b\in A$ and $x\in X$.\smallskip

Let $J$ be a Jordan algebra. A \emph{Jordan $J$-module} is a
vector space $X$, equipped with two bilinear products
$(a,x)\mapsto a \circ x$ and $(x,a)\mapsto x \circ a$ from
$J\times X$ to $X$, satisfying: $$a \circ x = x\circ a,\ \ a^2
\circ (x \circ a) = (a^2\circ  x)\circ a, \hbox{ and, }$$ $$2(
(x\circ a)\circ  b) \circ a + x\circ (a^2 \circ b) = 2 (x\circ
a)\circ  (a\circ b) + (x\circ b)\circ a^2,$$ for every $a,b\in J$
and $x\in X$ (see  \cite[\S II.5,p.82]{Jac}).
\smallskip

Let $E$ be a complex (resp. real) Jordan triple.  A \emph{Jordan
triple $E$-module}  (also called \emph{triple $E$-module}) is a
vector space $X$ equipped with three mappings $$\{.,.,.\}_1 :
X\times E\times E \to X, \quad \{.,.,.\}_2 : E\times X\times E \to
X$$ $$ \hbox{ and } \{.,.,.\}_3: E\times E\times X \to X$$
satisfying  the following axioms:
\begin{enumerate}[{$(JTM1)$}]
\item $\{ x,a,b \}_1$ is linear in $a$ and $x$ and conjugate
linear in $b$ (resp., trilinear), $\{ abx \}_3$ is linear in $b$
and $x$ and conjugate linear in $a$ (resp., trilinear) and
$\{a,x,b\}_2$ is conjugate linear in $a,b,x$ (resp., trilinear)
\item  $\{ x,b,a \}_1 = \{ a,b,x \}_3$, and $\{ a,x,b \}_2  = \{
b,x,a \}_2$  for every $a,b\in E$ and $x\in X$. \item Denoting by
$\J ...$ any of the products $\{ .,.,. \}_1$, $\{ .,.,. \}_2$ and
$\{ .,.,. \}_3$, the identity $\J {a}{b}{\J cde} = \J{\J abc}de $
$- \J c{\J bad}e +\J cd{\J abe},$ holds whenever one of the
elements  $a,b,c,d,e$ is in $X$ and the rest are in $E$.
\end{enumerate}

When $E$ is a Jordan Banach triple and $X$ is a triple $E$-module
which is  also a Banach space and, for each $a,b$ in $E$, the
mappings $x\mapsto \J abx_3$ and $x\mapsto \J axb_2$ are
continuous, we shall say that $X$ is a triple $E$-module with
\emph{continuous module operations}. When the products $\{ .,.,.
\}_1$, $\{ .,.,. \}_2$ and $\{ .,.,. \}_3$ are (jointly)
continuous we shall say that $X$ is a \emph{Banach (Jordan) triple
$E$-module}. \smallskip

It is obvious that every real or complex Jordan triple $E$ is a
{\it real}  triple $E$-module. Actually, every triple ideal $J$ of
$E$ is a (real) triple $E$-module. It is problematical whether
every complex Jordan triple $E$ is a complex triple $E$-module for
a suitable triple product. We shall see later that triple modules
have a priori a different behavior than bi-modules over
associative algebras and Jordan modules (see Remark \ref{r last}).\smallskip

Every real or complex associative algebra $A$ (resp.,  Jordan
algebra $J$) is a real Jordan  triple with respect to $\J abc :=
\frac12 \left(abc +cba\right)$, $a,b,c\in A$ (resp., $\J abc =
(a\circ b) \circ c + (c\circ b) \circ a - (a\circ c) \circ b$) ,
$a,b,c\in J$). It is not hard to see that every $A$-bimodule $X$
is a real triple $A$-module with respect to the products $\J abx_3
:=   \frac12 \left(abx +xba\right )$ and  $\J axb_2 = \frac12
\left(axb +bxa\right)$, and that every Jordan module $X$ over a
Jordan algebra $J$ is a real triple $J$-module with respect to the
products $\J abx_3 :=  (a\circ b) \circ x + (x\circ b) \circ a -
(a\circ x) \circ b$ and  $\J axb_2 = (a\circ x) \circ b + (b\circ
x) \circ a - (a\circ b) \circ x.$
\smallskip

Hereafter, the triple products $\J \cdot\cdot\cdot_j$, $ j=1,2,3$, which occur in the definition of Jordan triple module
will be denoted simply by $\J \cdot\cdot\cdot$ whenever the
meaning is clear from the context.
\smallskip

It is a little bit more laborious to check that the dual space,
$E^*$, of a complex (resp., real) Jordan Banach triple $E$ is  a
complex (resp., real) triple $E$-module with respect to the
products: \begin{equation}\label{eq module product dual 1}   \J
ab{\varphi} (x) = \J {\varphi}ba (x) := \varphi \J bax
\end{equation} and \begin{equation}\label{eq module product dual
2} \J a{\varphi}b (x) := \overline{ \varphi \J axb }, \forall \varphi\in
E^*, a,b,x\in E. \end{equation}

Given a triple $E$-module $X$  over a Jordan triple $E$, the
space $E\oplus X$ can be equipped with a structure of real Jordan
triple with respect to the product $\J {a_1+x_1}{a_2+x_2}{a_3+x_3}
= \J {a_1}{a_2}{a_3} +\J  {x_1}{a_2}{a_3}+\J  {a_1}{x_2}{a_3} + \J
{a_1}{a_2}{x_3}$. Consistent with the terminology in \cite[\S
II.5]{Jac}, $E\oplus X$ will be called the \emph{triple split null extension}
of $E$ and $X$. \smallskip

A subspace $S$ of a triple $E$-module $X$ is said to be a
\emph{Jordan triple submodule} or a \emph{triple submodule} if and
only if $\J EES \subseteq S$ and $ \J ESE \subseteq S$. Every triple ideal $J$ of
$E$ is a Jordan triple $E$-submodule of $E$.\smallskip

\subsection{Quadratic annihilator}
Given an element $a$ in a Jordan triple $E$, we shall denote by
$Q(a)$ the conjugate linear operator on $E$ defined by $Q(a) (b)
:= \J aba.$ The  following formula is always satisfied
\[
Q(a) Q(b) Q(a) = Q(Q(a)b), \ \ (a,b\in E).\]
and remains true for $Q(\cdot)$ acting on a triple $E$-module $X$:
\begin{equation}\label{basic equation}
\J{a}{\J{b}{\J  axa}{b}}{a}=\J{\J aba}{x}{\J aba}\ ,\  x\in X.
\end{equation}

For each submodule $S$ of a  triple $E$-module $X$, we define its
\emph{quadratic annihilator}, Ann$_{E} (S)$,  as the set $\{ a\in
E : Q (a) (S) = \J aSa = 0\}$. Since $S$ is triple submodule of
$X$, it follows by $(\ref{basic equation})$ that \begin{equation}
\label{eq submodule 1} \J {a}E {a} \subset
{\hbox{Ann}_{E} (S)}, \ \forall a\in \hbox{Ann}_{E} (S)
\end{equation}
and
\begin{equation}
\label{eq submodule 2} \J b{\hbox{Ann}_{E} (S)}b \subseteq
{\hbox{Ann}_{E} (S)}, \ \forall b\in E.
\end{equation}

Consequently, $ {\hbox{Ann}_{E} (S)}$ is an inner ideal of $E$
whenever it is a linear subspace of $E$. Further,  $
{\hbox{Ann}_{E} (S)}$ is  a triple ideal of $E$ whenever $E$ is a
JB$^*$-triple  and $ {\hbox{Ann}_{E} (S)}$ is a linear subspace of
$E$, since as noted earlier, for JB$^*$-triples,  (\ref{eq
submodule 2}) implies $\J{E}{\hbox{Ann}_{E} (S)}{E}\subset
\hbox{Ann}_{E} (S)$.\label{ref ann is an ideal}\smallskip

Let $E$ be a Jordan triple. Two elements $a$ and $b$ in $E$ are
said to be \emph{orthogonal} (written $a\perp b$) if $L(a,b) =
L(b,a)=0$. A direct application of the Jordan identity yields
that, for each $c$ in $E$,
\begin{equation}\label{eq orth inner ideal} a\perp \J bcb  \hbox{ whenever } a\perp b.\end{equation}

Given an element $a$ in a  Jordan triple $E$, we denote $a^{[1]} =
a$, $a^{[3]} = \J aaa$ and $a^{[2 n +1]} := \J a{a^{[2n-1]}}a$
$(\forall n\in \mathbb{N})$. The Jordan identity implies that
$a^{[5]} = \J aa{a^{[3]}}$ , and by induction, $a^{[2n+1]} =
L(a,a)^n (a)$ for all $n\in\NN$. The element $a$ is called
\emph{nilpotent} if $a^{[2n+1]}=0$ for some $n$. Jordan triples
are power associative, that is,
$\J{a^{[k]}}{a^{[l]}}{a^{[m]}}=a^{[k+l+m]}$.\smallskip

A Jordan triple $E$ for which the vanishing of
$\J aaa$ implies that $a$ itself vanishes is said to be
\emph{anisotropic}. It is easy to check that $E$ is anisotropic
if and only if zero is the unique nilpotent element in $E$.\smallskip

Let $a$ and $b$ be two elements in a Jordan
triple $E$. If $L(a,b)=0$, then, for each $c$ in $E$, the Jordan
identity implies that $$\J {L(b,a)c}{L(b,a)c}{L(b,a)c} = 0.$$
Therefore, in an anisotropic Jordan triple, $a\perp b$ if and only if
$L(a,b)=0$.\smallskip

Let $a$ be an
element in a real (resp., complex) JB$^*$-triple $E$. Denoting by $E_a$ the
JB$^*$-subtriple generated by the element $a$, it is known that $E_a$
is JB$^*$-triple isomorphic (and hence isometric) to $C_0 (L)=
C_0(L,\RR)$ (resp., $C_0 (L)=
C_0(L,\CC)$) for some locally compact Hausdorff space $L\subseteq
(0,\|a\|],$ such that $L\cup \{0\}$ is compact. It is also known
that denoting by $\Psi$ the triple isomorphism from $E_a$ onto
$C_{0}(L),$ then $\Psi (a) (t) = t$ $(t\in L)$ (compare
\cite[Lemma 1.14]{Ka}, \cite[Proposition 3.5]{Ka96} or \cite[Page 14]{BurPeRaRu}).
The set $L$ is called the \emph{triple spectrum} of $a$.\smallskip

It should be noticed here that, in the setting of real or complex JB$^*$-triples
 orthogonality is a ``local concept''(compare Lemma 1 in \cite{BurFerGarMarPe},
 whose proof remains
valid for real JB$^*$-triples). Indeed, two elements $a$ and $b$ in a real
JB$^*$-triple $E$ are orthogonal if and only if one of the
following equivalent statements holds: $$ (a) \ \J aab =0, \ \ (b)
\ E_a \perp E_b, \ \ (c) \ \{b,b,a\} =0,$$
$$(d)\  a\perp b \hbox{ in a subtriple of $E$ containing both
elements}.$$\vspace{0,5mm}

Let $E$ be a (real or complex) Jordan Banach triple. We have
already mentioned that $E^*$ is a triple module with respect to the
products given in $(\ref{eq module product dual 1})$ and $(\ref{eq
module product dual 2})$. The triple module structure of $E^*$
satisfies the following additional property: given $a$ and $b$ in
$E$ with $a\perp b$ (in $E$), we have $\J ab{\varphi} =  \J
{\varphi}ba = 0$ for every $\varphi \in E^*$. That is, $a\perp b$
in the Jordan triple $E\oplus E^*$. Orthogonal elements in $E$
lift to orthogonal elements in the split null extension $E\oplus
E^*$.\smallskip

Let $X$ be a triple module over a Jordan triple $E$. We shall say
that $X$ has the property of \emph{lifting orthogonality} (LOP in
short) if $$\J abx = 0,\hbox{ for every } x\in X, \ a,b \in E
\hbox{ with } a\perp b.$$

We have just remarked that for every Jordan Banach triple $E$,
$E^*$ is a triple $E$-module satisfying LOP. When a Jordan triple
$E$ is regarded as a real triple $E$-module with its natural
products, then $E$ also has LOP. However, not every triple module
has this property. Let $A$ be a C$^*$-algebra regarded as a
complex JB$^*$-triple  with respect to $\J abc := \frac12 (a b^* c
+c b^* a)$. As noted earlier, the vector space $X=A$ is a real triple $A$-module
with respect to the products $\{a,b,x\}_3 := \frac12 (a b x +x b a)$
and $\{a,x,b\}_2 := \frac12 (a x b+ b x a)$. Two elements $a$ and $b$
in a real or complex  C$^*$-algebra $A$ are orthogonal  if and only if $ ab^* = b^* a = 0$ or equivalently,
in the
triple sense,
$aa^*b + b a^* a = 0$ or  $bb^*a + a b^* b = 0$ (compare
\cite[Lemma 1]{BurFerGarMarPe}). It is not hard to find a
C$^*$-algebra $A$ containing two orthogonal elements $a,b$ with
$\{a,b,x\}_3 \neq 0$ for some $x\in A$.\smallskip

Let $J$ be a norm-closed subspace of a JB$^*$-triple (resp., a
real JB$^*$-triple) $E$. Clearly, $J$ is a triple ideal of $E$
if and only if $J$ is a triple $E$-submodule of $E$.
Let $J$ be a triple ideal of $E$ regarded as a Jordan triple
$E$-submodule. We clearly have $$\hbox{Ann}_{_{E}} (J) : =
\{ a\in E : Q(a) (J) = 0\} \supseteq J^{\perp}:= \{ a \in E :
a\perp J\}.$$ Suppose now that $a\in \hbox{Ann}_{_{E}} (J)$.
Replacing $J$ with its weak$^*$-closure in $E^{**}$, we may assume
that $E$ is a JBW$^*$-triple, $J$ is a weak$^*$-closed triple
ideal and $Q(a) (J) =0.$ By \cite[Theorem 4.2 (4)]{Ho87}, there
exists a weak$^*$-closed triple ideal $K$ in $E$ such that $E =
J\oplus K$ and $J\perp K$. Writing $a= a_1+a_2$ with $a_1\in J$ and
$a_2\in K$, we deduce, by orthogonality, that $a_1^{[3]} = Q(a)
(a_1) \in Q(a) (J) = 0$, and hence $a= a_2\perp J$. We state this as a Lemma.\smallskip

\begin{lemma}
\label{l annihilator triple ideal} Let $E$ be a JB$^*$-triple
(resp., a real JB$^*$-triple). For each triple ideal $J$ in $E$ we
have $\hbox{Ann}_{_{E}} (J) = J^{\perp}$ is a norm closed triple
ideal of $E$.$\hfill\Box$
\end{lemma}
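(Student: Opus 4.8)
The statement to be proved is Lemma~\ref{l annihilator triple ideal}: for a triple ideal $J$ in a (real or complex) JB$^*$-triple $E$, one has $\hbox{Ann}_{_{E}}(J) = J^{\perp}$, and this set is a norm-closed triple ideal. In fact the paragraph immediately preceding the lemma already carries out the argument, so my proof proposal simply organises those observations. The plan is to establish the two inclusions $J^{\perp} \subseteq \hbox{Ann}_{_{E}}(J)$ and $\hbox{Ann}_{_{E}}(J) \subseteq J^{\perp}$, and then to note that $J^{\perp}$ is automatically a norm-closed triple ideal.

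\textbf{Step 1: the easy inclusion.} First I would observe that $J^{\perp} \subseteq \hbox{Ann}_{_{E}}(J)$. Indeed, if $a \perp J$ then $L(a,x) = L(x,a) = 0$ for every $x \in J$, so in particular $Q(a)(x) = \{a,x,a\} = L(a,x)a = 0$; hence $Q(a)(J) = 0$, i.e. $a \in \hbox{Ann}_{_{E}}(J)$. (Here I use that $J$ is a triple ideal, hence a triple $E$-submodule of $E$, so that $\hbox{Ann}_{_{E}}(J)$ is defined as in the text.)

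\textbf{Step 2: the reverse inclusion via a Peirce-type ideal splitting.} The substantive direction is $\hbox{Ann}_{_{E}}(J) \subseteq J^{\perp}$, and this is where the real work lies. Take $a \in \hbox{Ann}_{_{E}}(J)$, so $Q(a)(J) = 0$. Passing to the bidual, replace $E$ by $E^{**}$ (a JBW$^*$-triple, by the facts recalled in \S2) and $J$ by its weak$^*$-closure, which is a weak$^*$-closed triple ideal; the condition $Q(a)(J) = 0$ persists since $Q(a)$ is weak$^*$-continuous on a JBW$^*$-triple and $J$ is weak$^*$-dense in its closure. Now invoke \cite[Theorem 4.2(4)]{Ho87} to split $E^{**} = J \oplus K$ with $K$ a weak$^*$-closed triple ideal and $J \perp K$. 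Write $a = a_1 + a_2$ with $a_1 \in J$, $a_2 \in K$. Using orthogonality of the summands, $a_1^{[3]} = \{a,a,a_1\}$ lies in the orthogonal-complement decomposition in a way that reduces to $Q(a)(a_1)$; more precisely $a_1^{[3]} = Q(a_1)(a_1) = Q(a)(a_1) \in Q(a)(J) = 0$, and since a JB$^*$-triple is anisotropic this forces $a_1 = 0$. Hence $a = a_2 \in K \perp J$, so $a \in J^{\perp}$, and by LOP (or directly) this orthogonality relation descends back to $E \subseteq E^{**}$.

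\textbf{Step 3: $J^{\perp}$ is a norm-closed triple ideal.} Finally I would note that $J^{\perp}$ is norm closed because orthogonality $a \perp b$ is detected by the continuous condition $\{a,a,b\} = 0$ (the ``local'' characterisation of orthogonality recalled just before the lemma), so $J^{\perp} = \bigcap_{x \in J}\{a : \{a,a,x\} = 0\}$ is closed; and $J^{\perp}$ is a triple ideal of $E$ by \eqref{eq orth inner ideal} together with the JB$^*$-triple fact (recalled on page~\pageref{ref ann is an ideal}) that a subtriple $I$ with $\{E,I,E\}\subseteq I$ is automatically a triple ideal --- alternatively one cites \cite{Bun86} directly that $J^{\perp}$ is an ideal whenever $J$ is. The main obstacle is Step~2: one must be careful that the bidual passage legitimately preserves the annihilator condition (weak$^*$-continuity of $Q(a)$ and weak$^*$-density of $J$) and that the ideal decomposition of \cite{Ho87} applies, since the whole point of descending to $E^{**}$ is to gain the orthogonal complemented ideal $K$ that is not available in $E$ itself.
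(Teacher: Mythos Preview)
Your proposal is correct and follows essentially the same approach as the paper: the paragraph immediately preceding the lemma already contains the proof, and your Steps~1--2 reproduce it (easy inclusion, pass to the bidual, apply Horn's ideal splitting $E^{**}=J\oplus K$, and use orthogonality to see $a_1^{[3]}=Q(a)(a_1)=0$). Your Step~3 is more explicit than the paper about the norm-closed triple-ideal property, but note that once $\hbox{Ann}_E(J)=J^{\perp}$ is established and $J^{\perp}$ is seen to be a linear subspace, the earlier observations \eqref{eq submodule 1}--\eqref{eq submodule 2} already give the ideal property directly, so you need not invoke \eqref{eq orth inner ideal} separately.
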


Let $E$ be a JB$^*$-triple (resp., a real JB$^*$-triple). For each
$x$ in $E$, $E(x)$ will denote the norm-closure of $\J xEx$ in
$E$. It is known that  $E(x)$ coincides with the norm-closed inner
ideal of $E$ generated by $x$  and $E_x \subseteq E(x)$ (see
\cite{BuChuZa1}). By \cite[Proposition 2.1]{BuChuZa1}, $E(x)$ is a
JB$^*$-subalgebra of the JBW$^*$-algebra $E(x)^{**} =
\overline{E(x)}^{w^*} = E^{**}_{2} (r(x)),$ where $r(x)$ is the (so called)
range tripotent of $x$ in $E^{**}$. It is also known that $x\in
E(x)_{+}$.\label{inner ideal}\smallskip

For each functional $\varphi\in E^*$, there exists a unique
tripotent $s=s(\varphi)$ in $E^{**}$ satisfying that $\varphi =
\varphi P_2 (s)$ and $\varphi|_{E^{**}_2 (s)}$ is a faithful
normal positive functional on $E^{**}_2 (s)$ (compare
\cite[Proposition 2]{FriRu85} and \cite[Lemma 2.9]{MarPe} and
\cite[Lemma 2.7]{PeSta}, respectively). The tripotent $s(\varphi)$
is called the \emph{support tripotent} of $\varphi$ in $E^{**}$.

\begin{proposition}
\label{p annihilator submodule dual} Let $E$ be a JB$^*$-triple
(resp., a real JB$^*$-triple). For each triple submodule $S\subset
E^*$,
\begin{itemize}
\item[$(a)$] the quadratic annihilator $\hbox{Ann}_{_{E}} (S)$ is
a norm closed triple ideal of $E$, \item[$(b)$] $\hbox{Ann}_{_{E}}
(S)= E\cap \left(\bigcap_{\varphi\in S}  E^{**}_0
(s(\varphi))\right)$, \item[$(c)$] $\J {\hbox{Ann}_{_{E}}
(S)}{\hbox{Ann}_{_{E}} (S)}{S} = 0$  in the triple split null
extension $E\oplus E^*$.
\end{itemize}
\end{proposition}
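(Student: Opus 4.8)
The plan is to pass to the bidual, where the triple module structure of $E^*$ admits the cleanest description via support tripotents, and to reduce everything to the ideal-decomposition machinery already available for JBW$^*$-triples. First I would fix $\varphi \in S$ and analyze the condition $Q(a)\varphi = 0$ for $a \in E$. By the defining formula $(\ref{eq module product dual 1})$, $\J a\varphi a (x) = \overline{\varphi\J axa}$, so $Q(a)\varphi = 0$ precisely when $\varphi$ vanishes on $Q(a)(E) = \J aEa$, hence (by weak$^*$-density and separate weak$^*$-continuity of the triple product in $E^{**}$) on the weak$^*$-closed inner ideal $E^{**}_2(r(a))$ generated by $a$ in $E^{**}$. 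Using the support tripotent $s=s(\varphi)$ and the Peirce decomposition relative to $s$, the fact that $\varphi = \varphi P_2(s)$ is faithful on $E^{**}_2(s)$ should force $Q(a)(E^{**}_2(s)) = 0$, i.e.\ $a \perp s$ (here one uses the local characterization of orthogonality and the faithfulness to kill any nonzero positive piece of $Q(a)$ landing in $E^{**}_2(s)$). This gives $a \in E^{**}_0(s(\varphi))$, and running the argument in reverse shows the converse, which yields $(b)$: $\hbox{Ann}_{_E}(S) = E \cap \bigcap_{\varphi\in S} E^{**}_0(s(\varphi))$.

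For part $(a)$, I would first observe from $(\ref{eq submodule 1})$ and $(\ref{eq submodule 2})$ (which hold here because $S$ is a triple submodule) that $\hbox{Ann}_{_E}(S)$ already satisfies $\J {a}{E}{a} \subseteq \hbox{Ann}_{_E}(S)$ and $\J b{\hbox{Ann}_{_E}(S)}b \subseteq \hbox{Ann}_{_E}(S)$. The one missing ingredient to conclude it is an inner ideal, and then (for JB$^*$-triples) a triple ideal, is \emph{linearity}: a priori the quadratic annihilator is only a quadratic set. But the description in $(b)$ exhibits $\hbox{Ann}_{_E}(S)$ as an intersection of Peirce-zero spaces $E^{**}_0(s(\varphi))$, each of which is a weak$^*$-closed \emph{linear} subtriple (indeed inner ideal) of $E^{**}$; intersecting with $E$ and over $\varphi \in S$ preserves linearity and norm-closedness. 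So $\hbox{Ann}_{_E}(S)$ is a norm-closed linear subspace, and then the remark on page~\pageref{ref ann is an ideal} (for JB$^*$-triples, $(\ref{eq submodule 2})$ upgrades to $\J E{\hbox{Ann}_{_E}(S)}E \subseteq \hbox{Ann}_{_E}(S)$, using that a subtriple closed under $Q(b)$ for all $b$ is a triple ideal, cf.\ the characterization of ideals in JB$^*$-triples via \cite{Bun86}) finishes $(a)$. Alternatively one can invoke Lemma~\ref{l annihilator triple ideal}: writing $S^{\perp\perp}$-type reductions, $\hbox{Ann}_{_E}(S)$ coincides with $\hbox{Ann}_{_E}$ of the weak$^*$-closed triple ideal of $E^{**}$ generated by the supports, which is the orthogonal complement of a triple ideal, hence a triple ideal of $E$.

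For part $(c)$, let $a,b \in \hbox{Ann}_{_E}(S)$ and $\varphi \in S$; I must show $\J ab\varphi = 0$ in $E \oplus E^*$, equivalently $\varphi\J bax = 0$ for all $x \in E$. By $(b)$, both $a$ and $b$ lie in $E^{**}_0(s(\varphi))$, and $\varphi = \varphi P_2(s(\varphi))$. By Peirce rule $(c)$ (or rather its consequence $\{E_0(s),E_0(s),E^{**}\} \subseteq E_0(s) \oplus E_1(s) \oplus E^{**}$, and more precisely $L(b,a)$ with $a,b \in E_0(s)$ maps $E^{**}_2(s)$ into $E^{**}_2(s)$ but $L(b,a)$ acts as zero on... )—more carefully, since $a,b \in E^{**}_0(s)$ and $P_2(s)^*\varphi = \varphi$, we have $\varphi\J bax = \varphi P_2(s)\J bax$, and $P_2(s)\J bax$ involves the component of $\J bax$ in $E^{**}_2(s)$; by the Peirce calculus, $L(b,a)$ with $a,b \in E^{**}_0(s)$ annihilates $E^{**}_2(s)$ and maps into Peirce-$0$ and Peirce-$1$ spaces, so $P_2(s)\J bax = 0$. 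Hence $\J ab\varphi = 0$.

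The main obstacle I anticipate is part $(a)$: transferring the quadratic (non-linear a priori) annihilator to a genuine linear triple ideal. The crux is establishing the support-tripotent description in $(b)$ rigorously — in particular verifying that $Q(a)$ vanishing on $E^{**}_2(s(\varphi))$ follows from $\varphi\circ Q(a) = 0$ together with faithfulness of $\varphi$ on that Peirce-$2$ space, which requires knowing that $Q(a)$ restricted there has a positive/self-adjoint character (via the JB$^*$-algebra structure of $E^{**}_2(s)$) so that faithfulness of the positive functional bites. Once $(b)$ is in hand, $(a)$ and $(c)$ are essentially Peirce-arithmetic bookkeeping, and the real/complex cases run in parallel since all the tools invoked (support tripotents \cite{FriRu85,MarPe,PeSta}, the ideal decomposition \cite{Ho87}, local orthogonality \cite{BurFerGarMarPe}, ideal characterization \cite{Bun86}) are available in both settings.
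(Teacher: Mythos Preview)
Your overall strategy—prove $(b)$ first via support tripotents, then deduce $(a)$ from linearity of Peirce-zero spaces and $(c)$ from Peirce arithmetic—matches the paper's, and your arguments for $(a)$ and $(c)$ (conditional on $(b)$) are essentially correct. However, there is a genuine gap in your proof of the forward inclusion in $(b)$.

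You argue that $Q(a)\varphi = 0$, together with faithfulness of $\varphi$ on $E^{**}_2(s(\varphi))$, forces $a \in E^{**}_0(s(\varphi))$. This implication is \emph{false} if you use only $Q(a)\varphi = 0$ for that single $\varphi$. Concretely, take $E = M_2(\CC)$, $s = s(\varphi) = e_{11}$, $\varphi(x) = x_{11}$, and $a = e_{12} \in E_1(s)$. Then $\J a x a = e_{12}x^*e_{12}$, whose $(1,1)$ entry vanishes for every $x$, so $Q(a)\varphi = 0$; yet $a \notin E_0(s)$. The ``positive piece'' you hope faithfulness will kill simply is not there: from $\varphi\circ Q(a)=0$ on $E_2(s)$ you only extract $\varphi(a_2^2)=0$ (taking $x=s$), and $a_2^2$ need not be positive; the Peirce-$1$ component $a_1$ is not constrained at all by this route.

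What is missing is precisely the use of the \emph{submodule} hypothesis on $S$, which you never invoke in your argument for $(b)$. The paper uses it as follows: from $Q(a)\psi = 0$ for every $\psi\in S$ one gets $\psi(a)=0$ for every $\psi\in S$ (since $a\in E(a)\subseteq\ker\psi$). Because $S$ is a submodule, $\{\varphi,b,a\}\in S$ for every $b\in E$, hence $\{\varphi,b,a\}(a)=\varphi\J aab = 0$ for all $b$. Passing to $E^{**}$ and taking $b = s(\varphi)$ gives $\varphi\J a a {s(\varphi)} = 0$, and now the positive-definite sesquilinear form $(x,y)\mapsto \varphi\J xys$ on $E^{**}_2(s)\oplus E^{**}_1(s)$ (from \cite{FriRu85}) forces $P_2(s)a = P_1(s)a = 0$. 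The key object is $\varphi\J aas$, not $\varphi\J asa$; the former is genuinely positive-definite in $a$ and is only reachable through the submodule property.
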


\begin{proof} We prove (b) first.
For each $a\in \hbox{Ann}_{_{E}} (S)$ and each $\varphi \in S$, we
have by definition,  $\J a{\varphi}a =0$ and hence $\varphi Q(a)
(E) =0$. It follows that $E(a) \subseteq \ker(\varphi)$ for every
$\varphi\in S$, $a\in \hbox{Ann}_{_{E}} (S)$. In particular,
$\varphi (a) =0$. Since $S$  is a triple submodule, for every
$b\in E$, $\{\varphi,b,a\}\in S$, so
$\{\varphi,b,a\} (a) = 0$, that is,  $\varphi\J aab = 0$.\smallskip

Fix $\varphi\in S$. We have already seen that $\varphi\J aab = 0$
for every $b\in E$. Since $E$ is weak$^*$-dense in $E^{**}$ and
$\varphi\J aa. $ is weak$^*$-continuous on $E^{**}$, we deduce
that $\varphi\J aab = 0$, for every $b\in E^{**}$. Thus,
\begin{equation} \label{eq proof 1} \varphi\J aa{s(\varphi)} = 0,
\end{equation} where $s=s(\varphi)\in E^{**}$ denotes the  support tripotent
of $\varphi$ in $E^{**}$.\smallskip

Proposition 2 and Lemma 1.5 in \cite{FriRu85} together with the
Peirce arithmetic imply that the mapping $$(x,y) \mapsto \varphi\J
xy{s}= \varphi\J {P_2(s)x}{P_2(s)y}{s}+ \varphi\J
{P_1(s)x}{P_1(s)y}{s}$$ is faithful and  positive on $E^{**}_2
(s)\oplus E^{**}_1 (s)$, that is, $\varphi\J xx{s} \geq 0$ for
every $x \in E^{**}_2 (s)\oplus E^{**}_1 (s)$ and $\varphi\J xx{s}
= 0$ if and only if $x =0$. By $(\ref{eq proof 1})$, $$0=\varphi\J
aa{s(\varphi)} =\varphi\J {P_2(s)a+P_1(s) a}{P_2(s)a+P_1(s)
a}{s},$$ which implies that $P_2(s)a=P_1(s) a=0$.\smallskip

We have shown that $\hbox{Ann}_{_{E}} (S) \subseteq E\cap E^{**}_0
(s(\varphi))$, for every $\varphi \in S$.  This assures that
\begin{equation}\label{eq:0531101}
\hbox{Ann}_{_{E}} (S)\subseteq  E\cap \left(\bigcap_{\varphi\in S}  E^{**}_0 (s(\varphi))\right).
\end{equation}
To prove the reverse inclusion, let $b$ belong to the right side
of (\ref{eq:0531101}), let $\varphi\in S$ and let $c\in E$ have
Peirce decomposition $c=c_2+c_1+c_0$ with respect to $s(\varphi)$.
From Peirce arithmetic, $\J{b}{\varphi}{b}(c)=\varphi\J bcb
=\varphi\J {b}{c_0}{b}=0$, proving equality in (\ref{eq:0531101})
and establishing (b).\smallskip

To prove (c), let $b,c\in \hbox{Ann}_{_{E}} (S)$ and $\varphi\in S$. Then for
$x=x_2+x_1+x_0 \in E$ (with respect to $s(\varphi)$),  by Peirce
rules and properties of the support tripotent,  $\J bc\varphi
(x)=\varphi \J cbx=\varphi\J {c}{b}{x_2}+\varphi\J
{c}{b}{x_1}+\varphi\J {c}{b}{x_0}=0$, which proves c).\smallskip

Because of (\ref{eq submodule 1}) and (\ref{eq submodule 2}), to
prove (a) it remains to show that  $\hbox{Ann}_{_{E}} (S)$ is a
linear subspace of $E$. Take $a,b\in \hbox{Ann}_{_{E}} (S)$.
Since, by Peirce arithmetic, $Q(a,b) (E) \subseteq E\cap E^{**}_0
(s(\varphi)),$ and $L(a,b) (E) \subseteq E\cap \left( E^{**}_0
(s(\varphi)) \oplus E^{**}_1 (s(\varphi))\right),$ for every
$\varphi\in S$, it follows that $\J a{\varphi}b =0,$ and  $\J
ab{\varphi} =0,$ for every $\varphi\in S$. Therefore  (using only the first of these two facts),
$$ Q(a+b) \varphi= Q(a) \varphi+ Q(b) \varphi+ 2 Q(a,b) \varphi=0,$$
for every $a,b\in \hbox{Ann}_{_{E}} (S)$ and $\varphi \in S,$
which implies that $\hbox{Ann}_{_{E}} (S)$ is a linear subspace of
$E$ and completes  the proof.
\end{proof}

\begin{remark}\label{r real and complex JB$^*$-triples OAP}
Let $E$ be a real or complex JB$^*$-triple regarded as a real
Banach triple $E$-module. It can be easily seen that norm-closed
triple $E$-submodules and norm-closed triple ideals of $E$
coincide. The conclusions in the above Proposition \ref{p
annihilator submodule dual} remain true for any norm-closed triple
$E$-submodule (i.e. norm-closed triple ideal) of $E$. Indeed, let
$S=J$ be a norm-closed triple ideal of $E$. By Lemma \ref{l
annihilator triple ideal}, $\hbox{Ann}_{E} (J) = J^{\perp}$, which
implies that  $$\J {\hbox{Ann}_{_{E}} (J)}{\hbox{Ann}_{_{E}}
(J)}{J} = 0,$$ in the triple split null extension $E\oplus E$.
\end{remark}

\section{Triple derivations and triple module homomorphisms}

\subsection{Triple derivations}

Separating spaces have been revealed as a useful tool
in results of automatic continuity. This tool has been applied
by many authors in the  study of automatic continuity of binary and ternary
homomorphims,  derivations and module homomorphisms (see,
for example,  \cite{Rick50,BaCur,Yood,JohnSin68,JohnSin69,Sin74,Sin76,Dales78}
and \cite{Dales89}, among others).
These spaces also play an important role in the subsequent generalisations
of Kaplansky's theorem (compare \cite{CLev,HejNik} and \cite{FerGarPe}).\smallskip

Let $T: X\to Y$ be a linear mapping between two
normed spaces. Following \cite[Page 70]{Rick}, the
\emph{separating space}, $\sigma_{_Y} (T)$, of $T$ in $Y$ is defined
as the set of all $z$ in $Y$ for which there exists a sequence
$(x_n) \subseteq X$ with $x_n \rightarrow 0$ and
$T(x_n)\rightarrow z$. The \emph{separating space}, $\sigma_{_X}
(T)$, of $T$ in $X$ is defined by $\sigma_{_X} (T):=T^{-1}(\sigma_{_Y}
(T)).$

A straightforward application of the closed graph theorem shows
that a linear mapping $T$ between two Banach spaces $X$ and $Y$ is
continuous if and only if $\sigma_{_Y} (T) =\{0\}$ (c.f.
\cite[Proposition 4.5]{CLev}).
It is known that  $\sigma_{_X} (T)$ and $\sigma_{_Y} (T)$ are closed linear
subspaces of $X$ and $Y,$ respectively. \smallskip

A useful property of the separating space
$\sigma_{_{Y}} (T)$ asserts that for every bounded linear operator
$R$ from $Y$ to another Banach space $Z$, the composition $R T$ is
continuous if and only if $\sigma_{_Y}(T)\subseteq \ker (R)$.
Further, there exists a constant $M>0$ (which does not depend on
$R$ nor $Z$) such that $\|RT \| \leq M \ \|R\|$, whenever $RT$ is
continuous (compare \cite[Lemma 1.3]{Sin76}).\smallskip

Let $E$ be a complex (resp., real) Jordan triple and let $X$ be a
triple $E$-module. We recall that a conjugate linear (resp.,
linear) mapping $\delta : E \to X$ is said to be a
\emph{derivation} if
$$\delta \{a,b,c\} = \J {\delta(a)}bc +\J a{\delta(b)}c + \J ab{\delta(c)}.$$

Note that derivations on complex JB$^*$-triples to themselves are
linear mappings but that a derivation from a complex JB$^*$-triple
into a complex triple module is conjugate linear  by this
definition. This is not inconsistent, since as we have noted
earlier, it is not clear that a complex JB$^*$-triple $E$ can be
made into a complex triple $E$-module.

\begin{lemma}\label{l separating spaces of a derivation} Let
$\delta : E \to X$ be a triple derivation from a Jordan Banach
triple to a Banach (Jordan) triple $E$-module. Then $\sigma_{_X}
(\delta)$ is a norm-closed triple $E$-submodule of $X$ and
$\sigma_{_E} (\delta)$ is a norm-closed subtriple of $E$.
\end{lemma}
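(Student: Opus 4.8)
The plan is to use the two standard stability properties of separating spaces together with the derivation identity (\ref{eq:0409111}). First I would show that $\sigma_{_X}(\delta)$ is a triple $E$-submodule of $X$, i.e. that $\{E,E,\sigma_{_X}(\delta)\}\subseteq \sigma_{_X}(\delta)$ and $\{E,\sigma_{_X}(\delta),E\}\subseteq \sigma_{_X}(\delta)$. Fix $a,b\in E$ and $z\in \sigma_{_X}(\delta)$, so there is a sequence $x_n\to 0$ in $E$ with $\delta(x_n)\to z$ in $X$. From the derivation property,
\[
\delta\{a,b,x_n\} = \{\delta(a),b,x_n\} + \{a,\delta(b),x_n\} + \{a,b,\delta(x_n)\}.
\]
As $n\to\infty$, the continuity of the module operations of the Banach (Jordan) triple $E$-module forces $\{\delta(a),b,x_n\}\to 0$, $\{a,\delta(b),x_n\}\to 0$ (these are continuous in the $E$-variable $x_n$, which tends to $0$), while $\{a,b,\delta(x_n)\}\to \{a,b,z\}$ since $x\mapsto\{a,b,x\}_3$ is continuous on $X$. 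On the other hand $\{a,b,x_n\}\to 0$ in $E$, so by definition $\{a,b,z\}\in\sigma_{_X}(\delta)$. The argument for $\{a,z,b\}_2$ is identical, using $\delta\{a,x_n,b\} = \{\delta(a),x_n,b\} + \{a,\delta(x_n),b\} + \{a,x_n,\delta(b)\}$, the continuity of $x\mapsto\{a,x,b\}_2$ on $X$ for the surviving middle term, and the continuity of the other two terms in their $E$-argument $x_n\to 0$. That $\sigma_{_X}(\delta)$ is norm-closed is the general fact about separating spaces recalled in the excerpt.

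Next I would show $\sigma_{_E}(\delta)$ is a subtriple of $E$. By definition $\sigma_{_E}(\delta)=\delta^{-1}(\sigma_{_X}(\delta))$ (here $\delta$ is conjugate linear or linear, and this preimage is still a closed subspace, as noted in the excerpt). Let $a,b,c\in\sigma_{_E}(\delta)$, so $\delta(a),\delta(b),\delta(c)\in\sigma_{_X}(\delta)$. Applying the derivation identity, $\delta\{a,b,c\} = \{\delta(a),b,c\} + \{a,\delta(b),c\} + \{a,b,\delta(c)\}$, and each of the three terms on the right lies in $\sigma_{_X}(\delta)$ by the submodule property just established (each has exactly one entry from the module $\sigma_{_X}(\delta)\subseteq X$ and two entries from $E$). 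Since $\sigma_{_X}(\delta)$ is a linear subspace, $\delta\{a,b,c\}\in\sigma_{_X}(\delta)$, hence $\{a,b,c\}\in\delta^{-1}(\sigma_{_X}(\delta))=\sigma_{_E}(\delta)$. Thus $\{\sigma_{_E}(\delta),\sigma_{_E}(\delta),\sigma_{_E}(\delta)\}\subseteq\sigma_{_E}(\delta)$, and $\sigma_{_E}(\delta)$ is a norm-closed subtriple.

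The only point requiring care — and the main, if mild, obstacle — is bookkeeping the continuity hypotheses: for the submodule claim one needs precisely that the module operations are jointly continuous (so that $\{\delta(a),b,x_n\}$ and $\{a,\delta(b),x_n\}$ vanish with $x_n$) and, for the surviving term, that $x\mapsto\{a,b,x\}_3$ and $x\mapsto\{a,x,b\}_2$ are continuous on $X$ — both guaranteed by the definition of a Banach (Jordan) triple $E$-module. One should also note that $\delta$ itself need not be continuous, so none of the conclusions may invoke continuity of $\delta$; everything goes through the algebraic identity plus the module continuity, exactly as above.
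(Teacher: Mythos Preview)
Your proof is correct and follows essentially the same approach as the paper's own proof: for the submodule claim you pick a null sequence $(c_n)$ with $\delta(c_n)\to z$, expand $\delta\{a,b,c_n\}$ (resp.\ $\delta\{a,c_n,b\}$) via the derivation identity, and use joint continuity of the module products to isolate the surviving term; for the subtriple claim you use $\sigma_{_E}(\delta)=\delta^{-1}(\sigma_{_X}(\delta))$ together with the submodule property just established. Your additional remarks on which continuity hypotheses are actually invoked are accurate and make explicit what the paper leaves implicit.
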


\begin{proof} Given $a,b$ in $E$ and $x\in \sigma_{_X} (\delta)$,
there exists a sequence $(c_n)$ in $E$ with $(c_n)\to 0$ and
$\delta (c_n) \to x$ in norm. The sequence $(\J ab{c_n})$ (resp.,
$(\J a{c_n}b)$) tends to zero in norm and $\delta \J ab{c_n} = \J
{\delta{a}}b{c_n} + \J a{\delta b}{c_n} + \J ab{\delta(c_n)}\to \J
ab{x}$ (resp., $\delta \J a{c_n}b \to \J axb$), which proves the
first statement.\smallskip

If $a,b,c\in \sigma_E(\delta)$, then $\delta a,\delta b,\delta c\in \sigma_X(\delta)$ and by the first statement
 $\delta\J{a}{b}{c}\in\sigma_X(\delta)$, as required.
\end{proof}

Let $\delta : E \to X$ be a triple derivation from a Jordan Banach
triple $E$ to a Banach triple $E$-module. Since $\sigma_{_X}
(\delta)$ is a norm closed triple $E$-submodule of $X$,  Ann$_{E}
(\sigma_{_X} (\delta))$ is a norm closed inner ideal of $E$
whenever it is a linear subspace of $E$ (actually, in such a case,
it is a triple ideal when $E$ is a real or complex
JB$^*$-triple).\smallskip

 Let us take $a$ in $E$. Since $\delta$ is in particular a linear mapping, from the useful property mentioned above,
$\sigma_{_X} (\delta) \subseteq \ker (Q(a))$ if and only if $Q(a)
\delta$ is a continuous linear mapping from $E$ to $X$, and we
deduce that $$\hbox{Ann}_{E} (\sigma_{_X} (\delta)) =\{ a\in E :
Q(a) \delta \hbox{ is continuous}\}.$$ Moreover, for each $a$ in
$E$, $\delta Q(a) = Q(a) \delta + 2Q(a,\delta a)$, and  it follows
that  $Q(a) \delta$ is continuous if and only if $\delta Q(a)$ is.

\subsection{Triple module homomorphisms}

Let $X$ and $Y$ be two triple $E$-modules over a real or complex
Jordan triple $E$.  A linear mapping \linebreak $T : X \to Y$ is
said to be a \emph{triple $E$-module homomorphism} if the
identities $$T \J abx = \J ab{T(x)} \hbox{ and }  T \J axb = \J
a{T(x)}b,$$ hold for every $a,b\in E$ and $x\in X$. \smallskip

As above,
$$\hbox{Ann}_{E} (\sigma_{_Y} (T)) =\{ a\in E :
Q(a) T \hbox{ is continuous}\}$$
and
since a triple module
$E$-homomorphism $T: X \to Y$  commutes with $Q(a)$ (acting on $X$), we have
 $$\hbox{Ann}_{E} (\sigma_{_Y} (T)) =\{ a\in E :  T Q(a) \hbox{ is continuous}\},$$
where $Q(a)$ acts on $Y$.
\smallskip

The argument applied in the proof of Lemma \ref{l separating
spaces of a derivation} is also valid to prove the following
result.

\begin{lemma}\label{l separating spaces of a module homomorphism}
Let $E$ be a Jordan Banach triple and let  $T : X \to Y$ be a
triple $E$-module homomorphism between two Banach space which are
triple $E$-modules with continuous module operations. Then
$\sigma_{_Y} (T)$ and $\sigma_{_X} (T)$ are  norm closed triple
$E$-submodules of $Y$ and $X$, respectively.$\hfill\Box$
\end{lemma}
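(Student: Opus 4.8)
The plan is to mimic the proof of Lemma~\ref{l separating spaces of a derivation} verbatim, replacing the derivation identity by the homomorphism identity. First I would recall that $\sigma_{_Y}(T)$ is always a norm-closed linear subspace of $Y$ (this is the standard fact about separating spaces quoted in the preamble), so only the module-stability needs to be checked. Fix $a,b \in E$ and $y \in \sigma_{_Y}(T)$; by definition there is a sequence $(x_n) \subseteq X$ with $x_n \to 0$ and $T(x_n) \to y$ in norm. Because $X$ has continuous module operations, the sequences $(\{a,b,x_n\})$ and $(\{a,x_n,b\})$ converge to $0$ in $X$; because $Y$ has continuous module operations and $T$ is a triple $E$-module homomorphism, $T\{a,b,x_n\} = \{a,b,T(x_n)\} \to \{a,b,y\}$ and $T\{a,x_n,b\} = \{a,T(x_n),b\} \to \{a,y,b\}$ in $Y$. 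Hence $\{a,b,y\}$ and $\{a,y,b\}$ lie in $\sigma_{_Y}(T)$, which is exactly the condition $\{E,E,\sigma_{_Y}(T)\} \subseteq \sigma_{_Y}(T)$ and $\{E,\sigma_{_Y}(T),E\} \subseteq \sigma_{_Y}(T)$ defining a triple $E$-submodule.

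For the statement about $\sigma_{_X}(T) = T^{-1}(\sigma_{_Y}(T))$, I would observe that it is the preimage under the linear map $T$ of a triple submodule, and then argue directly: if $x \in \sigma_{_X}(T)$ and $a,b \in E$, then $T\{a,b,x\} = \{a,b,T(x)\}$ and $T(x) \in \sigma_{_Y}(T)$, so by the submodule property just proved $\{a,b,T(x)\} \in \sigma_{_Y}(T)$, whence $\{a,b,x\} \in \sigma_{_X}(T)$; similarly for $\{a,x,b\}$. Norm-closedness of $\sigma_{_X}(T)$ is again a standard property of separating spaces (alternatively, it is the preimage of the closed set $\sigma_{_Y}(T)$ under the continuous-on-its-domain-of-definition argument used in \cite{Sin76}), and linearity is immediate since both $T$ and $\sigma_{_Y}(T)$ are linear.

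I do not anticipate any genuine obstacle here: the only point requiring a moment's care is that the continuity hypotheses are split between the two modules — one needs $X$ to have continuous module operations to kill $\{a,b,x_n\}$, and $Y$ to have continuous module operations to pass the limit through the outer products after applying $T$ — but both are included in the hypothesis that $X$ and $Y$ are triple $E$-modules with continuous module operations. The paper already signals this by saying ``The argument applied in the proof of Lemma~\ref{l separating spaces of a derivation} is also valid'', so the intended proof is the one-line reduction above, and I would present it as such.

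\begin{proof}
As recalled above, $\sigma_{_Y}(T)$ is a norm-closed linear subspace of $Y$, so it suffices to verify the module identities. Fix $a,b\in E$ and $y\in\sigma_{_Y}(T)$, and choose $(x_n)\subseteq X$ with $x_n\to 0$ and $T(x_n)\to y$ in norm. Since $X$ has continuous module operations, $\{a,b,x_n\}\to 0$ and $\{a,x_n,b\}\to 0$ in $X$; since $T$ is a triple $E$-module homomorphism and $Y$ has continuous module operations, $T\{a,b,x_n\}=\{a,b,T(x_n)\}\to\{a,b,y\}$ and $T\{a,x_n,b\}=\{a,T(x_n),b\}\to\{a,y,b\}$ in $Y$. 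Hence $\{a,b,y\},\{a,y,b\}\in\sigma_{_Y}(T)$, so $\sigma_{_Y}(T)$ is a norm-closed triple $E$-submodule of $Y$. For $\sigma_{_X}(T)=T^{-1}(\sigma_{_Y}(T))$, note that it is a norm-closed linear subspace of $X$ by the general properties of separating spaces; moreover if $x\in\sigma_{_X}(T)$ and $a,b\in E$, then $T\{a,b,x\}=\{a,b,T(x)\}\in\sigma_{_Y}(T)$ and $T\{a,x,b\}=\{a,T(x),b\}\in\sigma_{_Y}(T)$ by the first part, so $\{a,b,x\},\{a,x,b\}\in\sigma_{_X}(T)$.
\end{proof}
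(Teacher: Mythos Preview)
Your proof is correct and is exactly the argument the paper intends: the lemma is stated with a $\Box$ and the remark that the proof of Lemma~\ref{l separating spaces of a derivation} carries over, and you have written out precisely that transcription, using the module-homomorphism identities $T\{a,b,x\}=\{a,b,Tx\}$ and $T\{a,x,b\}=\{a,Tx,b\}$ in place of the derivation identity. The norm-closedness of both $\sigma_{_Y}(T)$ and $\sigma_{_X}(T)$ is, as you note, the standard fact already quoted in the paragraph preceding Lemma~\ref{l separating spaces of a derivation}, so no further justification is needed.
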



The following lemma provides a key tool needed in our main result.

\begin{lemma}\label{l 0} Let $E$ be a Jordan Banach triple,
$X$ a Banach triple $E$-module satisfying LOP, $Y$ a Banach space
which is a triple $E$-module with continuous module operations and
$T: X\to Y$ a triple module homomorphism. Then for every sequence
$(a_n)$ of mutually orthogonal  non-zero elements  in $E$, we
have:
\begin{enumerate}[$(a)$]
\item
$Q(a_n)^2 T$ is continuous for all but a finite number of $n$;
\item $a_n^{[3]}$ belongs to {\rm Ann}$_{E} (\sigma_{_Y}
(T))$ for all but a finite number of $n$;
\item  the set
$$\left\{ \frac{\|Q(a_n^{[3]}) T\|}{\|a_n\|^6} :  Q(a_n^{[3]}) T
\hbox{ is continuous }\right\}$$ is bounded.
\end{enumerate}
\end{lemma}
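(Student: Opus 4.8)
The plan is to exploit the classical ``gliding hump'' / infinite-sum trick for separating spaces, combined with the orthogonality arithmetic encoded in the Peirce rules and equation (\ref{basic equation}). The three statements are in fact logically linked: $(b)$ is essentially a reformulation of $(a)$ via the identity $Q(a)^2 = Q(Q(a)a) = Q(a^{[3]})$ together with the description $\hbox{Ann}_{E}(\sigma_{_Y}(T)) = \{a \in E : Q(a)T \hbox{ is continuous}\}$ recorded just before the lemma, and $(c)$ is the quantitative refinement of $(a)$ obtained by keeping track of norms. So the real content is $(a)$, and I would prove it first.

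First I would set up the standard argument. Suppose, for contradiction, that there is an infinite subsequence (which I relabel $(a_n)$) of mutually orthogonal non-zero elements such that $Q(a_n)^2 T$ is discontinuous for every $n$. Discontinuity of $Q(a_n)^2 T$ means $\sigma_{_Y}(Q(a_n)^2 T) \neq \{0\}$; since $Q(a_n)^2$ is a bounded operator on $Y$ and $T$ has closed separating space, one gets $\sigma_{_Y}(Q(a_n)^2 T) = Q(a_n)^2 \sigma_{_Y}(T) \neq \{0\}$, so there is $z_n \in \sigma_{_Y}(T)$ with $Q(a_n)^2 z_n \neq 0$, and a sequence $(x_k^{(n)})_k$ in $X$ with $x_k^{(n)} \to 0$ and $T(x_k^{(n)}) \to z_n$. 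Using that $T$ is a triple module homomorphism commuting with $Q(a_n)$ (acting on $X$ and on $Y$), I can arrange, after passing to a diagonal sequence and rescaling, a single sequence $(u_k)$ in $X$ with $u_k \to 0$ but $\|T(u_k)\|$ growing fast enough that $\sum_k T(Q(a_{n_k})^2 u_k)$ fails to converge for a suitable increasing choice of indices $n_k$ — while $\sum_k Q(a_{n_k})^2 u_k$ does converge in $X$ because of orthogonality. The key orthogonality input is that for mutually orthogonal $a_n$ the operators $Q(a_n)$ have orthogonal ranges and annihilate each other's ranges: from (\ref{eq orth inner ideal}), $a_m \perp a_n$ for $m \neq n$ forces $Q(a_m)$ to kill anything built from $a_n$, and on the module side the LOP hypothesis on $X$ is exactly what is needed so that $\{a_m, a_m, \{a_n, a_n, x\}\} = 0$ and similar cross terms vanish, making the partial sums of $\sum_k Q(a_{n_k})^2 u_k$ a Cauchy (indeed eventually telescoping/orthogonal) sequence whose image under $T$ would have to converge if $T$ restricted appropriately were continuous. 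Applying $T$ and using $T Q(a)^2 = Q(a)^2 T$ then yields a convergent-domain/divergent-range contradiction in the style of \cite[Lemma 1.3]{Sin76}.

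Once $(a)$ is established, $(b)$ is immediate: $Q(a_n)^2 T = Q(a_n^{[3]}) T$ by the fundamental formula $Q(a)Q(b)Q(a) = Q(Q(a)b)$ specialized to $b = a$ (so $Q(a)^2 = Q(a^{[3]})$ on the module, via (\ref{basic equation})), and continuity of $Q(a_n^{[3]})T$ is precisely the statement $a_n^{[3]} \in \hbox{Ann}_{E}(\sigma_{_Y}(T))$ by the characterization of the quadratic annihilator in terms of separating spaces. For $(c)$, I would run a variant of the same contradiction argument quantitatively: if the displayed set were unbounded, pick $n_k$ with $\|Q(a_{n_k}^{[3]})T\|/\|a_{n_k}\|^6 \to \infty$, normalize $b_k := a_{n_k}/\|a_{n_k}\|$ so that $\|b_k\| = 1$ and $\|Q(b_k^{[3]})T\| \to \infty$ while the $b_k$ remain mutually orthogonal, then choose unit vectors $x_k \in X$ with $\|T(Q(b_k^{[3]})x_k)\| = \|Q(b_k^{[3]})T(x_k)\|$ close to $\|Q(b_k^{[3]})T\|$, scale by $\epsilon_k \to 0$ chosen so that $\sum \epsilon_k x_k$ converges in $X$ (using orthogonality of the supports again) but $\sum \epsilon_k Q(b_k^{[3]})T(x_k)$ diverges — contradicting continuity of $T$ on the closed submodule generated by these elements, or more precisely applying the ``$\|RT\| \le M\|R\|$'' principle recorded before Lemma~\ref{l separating spaces of a derivation}.

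The main obstacle I anticipate is the bookkeeping needed to make the orthogonality genuinely produce \emph{convergent} series in $X$: LOP guarantees $\{a, b, x\} = 0$ when $a \perp b$, but to sum $Q(a_{n_k})^2 u_k$ one needs the summands to be not just individually controlled but to have pairwise-annihilating interactions under the module operations, and checking that all the Peirce-type cross terms vanish (using $(JTM3)$, (\ref{basic equation}), and the local nature of orthogonality in JB$^*$-triples) is where the care lies. A secondary subtlety is that $\delta$ (and hence the induced module homomorphism) is only assumed defined on all of $E$, not on a nice subalgebra, so one must verify the orthogonal elements $a_n$ and their cubes $a_n^{[3]}$ behave well under the abstract module axioms rather than appealing to any $C^*$-algebraic picture — but this is precisely what equations (\ref{eq submodule 1}), (\ref{eq submodule 2}) and the LOP hypothesis are designed to handle.
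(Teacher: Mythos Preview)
Your plan for $(a)$ is essentially the paper's gliding-hump argument, though the paper is cleaner: it forms the single element $z=\sum_k Q(a_k)(x_k)\in X$ (one $Q$, not two) with $\|x_k\|$ chosen small, then applies $Q(a_n)$ on the $Y$-side to $T(z)$ to isolate the $n$-th term and derive $\|T(z)\|>n$ for all $n$. Your version will work once cleaned up, but note that LOP is used on $X$ precisely to get $Q(a_k)Q(a_n)=0$ on $X$ for $k\neq n$; on $Y$ one only needs continuity of each $Q(a_n)$.

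There is a slip in your derivation of $(b)$: the fundamental formula gives $Q(a)^3=Q(a^{[3]})$, not $Q(a)^2=Q(a^{[3]})$. The paper fixes this simply by composing once more with the bounded operator $Q(a_n)$ on $Y$: if $Q(a_n)^2T$ is continuous then so is $Q(a_n)\cdot Q(a_n)^2T=Q(a_n)^3T=Q(a_n^{[3]})T$.

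The real gap is in $(c)$. Your proposal to invoke the Sinclair bound $\|RT\|\le M\|R\|$ with $R=Q(a_n^{[3]})$ (on $Y$) does not close, because $Y$ is only assumed to have \emph{separately} continuous module operations, not jointly continuous ones; so you know each $Q(a_n^{[3]})$ is bounded on $Y$, but you have no uniform estimate $\|Q(a_n^{[3]})\|_{\mathcal L(Y)}\le C\|a_n\|^6$. The alternative gliding-hump you sketch runs into the same wall: after forming a convergent sum in $X$ and projecting with $Q(b_n)^2$ on the $Y$-side, the resulting growth of $\|Q(b_n)^2 T(w)\|$ contradicts nothing unless $\|Q(b_n)^2\|_{\mathcal L(Y)}$ is uniformly bounded. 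The paper circumvents this entirely with a different idea: assuming the ratios unbounded, it extracts a \emph{doubly} indexed subsequence $(a_{p,q})$ with $\|Q(a_{p,q}^{[3]})T(x_{p,q})\|>4^{2q}qp$, then condenses along $q$ to form $b_p=\sum_q 2^{-q}a_{p,q}\in E$. The $(b_p)$ are mutually orthogonal, and using LOP on $X$ one computes $Q(b_p)^2Q(a_{p,q})=4^{-2q}Q(a_{p,q}^{[3]})$ there, which forces $Q(b_p)^2T$ to be unbounded for every $p$, contradicting part $(a)$. This reduction of $(c)$ back to $(a)$ is the missing ingredient.
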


\begin{proof} Suppose that the statement (a) of the
lemma is false. Passing to a subsequence, we may assume that
$Q(a_n)^2 T$ is an unbounded operator for every natural $n$. In
this case we can find a sequence $(x_n)$ in $X$ satisfying
$\|x_n\| \leq 2^{-n} \|a_n\|^{-2},$ and $\|Q(a_n)^2 T(x_n) \| > n
\ K_n$, where $K_n$ is the norm of the bounded conjugate linear
operator $Q(a_n) : Y\to Y$, $Q(a_n) y = \J {a_n}y{a_n}$. Since
$Q(a_n)^2 T$ is discontinuous $K_n = \|Q(a_n)\|\neq 0,$ for every
$n$. (Note that  $\|Q(a)\|\le M\|a\|^2$ for some constant
$M$.)\smallskip

The series $ \sum_{k=1}^{\infty} Q(a_k) (x_k) $ defines an element
$z$ in the Banach triple module $X$. For $n\ne k$, the LOP and the
identity $$\J{x}{a_n}{\J{a_k}{a_n}{a_k}}
+\J{a_k}{\J{a_n}{x}{a_n}}{a_k}=$$
$$\J{\J{x}{a_n}{a_k}}{a_n}{a_k}+\J{a_k}{a_n}{\J{x}{a_n}{a_k}}
$$
shows that $\J{a_k}{\J{a_n}{x}{a_n}}{a_k}=0$. That is, $Q(a_k)Q(a_n)=0$ for $k\ne n$ and the same argument shows that for any $b\in E$,
\begin{equation}\label{755}
Q(a_k,b)Q(a_n)=0\hbox{ for }n\ne k.
\end{equation}
 Hence, for each  natural $n$, we
have
\begin{eqnarray*} K_n \|T(z)\| &\geq& \|Q(a_n)T(z)\| = \|T Q(a_n) (z) \|\\
&=& \|T Q(a_n)^2 (x_n) \| = \|
Q(a_n) ^2T(x_n) \| > K_n \ n,
\end{eqnarray*} which is impossible. This proves (a).\smallskip

Since $Q(a_n)^2 T$ is continuous for all but a finite number of
$n$ and the module operations are continuous on $Y$, it follows
that $Q(a_n) Q(a_n)^2 T = Q(a_n)^3 T = Q(a_n^{[3]})T$ is continuous (and
hence,  $a_n^{[3]}\in$ {\rm Ann}$_{E} (\sigma_{_Y} (T))$) for all
but a finite number of $n$. This proves (b).\smallskip

In order to prove (c)  we may assume that $Q(a_n)^2 T$ is
continuous for every natural $n$. Arguing by reduction to the
absurd, we assume that  $\left\{ \frac{\|Q(a_n^{[3]})
T\|}{\|a_n\|^6} :  n\in \mathbb{N}\right\}$ is unbounded. There is
no loss of generality in assuming that $\|a_n\| = 1$, for every
$n$. By the Cantor diagonal process we may find a doubly indexed
subsequence $(a_{p,q})_{_{p,q\in \mathbb{N}}}$ of $(a_n)$ and a
doubly indexed sequence $(x_{p,q})$ in the unit sphere of $X$ such
that $\left\|Q(a_{p,q}^{[3]})\  T (x_{p,q})\right\| > 4^{2 q} \ q
\ p. $ Let $b_{p} := \sum_{q=1}^{+\infty}  2^{-q} \ a_{p,q}\in E$.
We observe that $a_{p,q} \perp a_{l,m}$ for every $(p,q)\neq
(l,m)$. It is therefore clear that $(b_p)$ is a sequence of
mutually orthogonal elements in $E$.  Having in mind that $X$
satisfies LOP, we deduce from (\ref{basic equation}) and (\ref{755})  that $Q(b_p)^2 Q(a_{p,q}) (x) = 4^{-2 q}
\ Q(a_{p,q}^{[3]}) (x),$ for every $x$ in $X$. Thus,
$$\left\| Q(b_{p})^2 T Q(a_{p,q}) (x_{p,q}) \right\| =\left\| T
Q(b_{p})^2 Q(a_{p,q}) (x_{p,q}) \right\| $$
$$= 4^{-2q} \left\|  T  Q(a_{p,q}^{[3]}) (x_{p,q}) \right\|=
4^{-2q} \left\|  Q(a_{p,q}^{[3]}) T (x_{p,q}) \right\| >q \ p,$$
for every $p,q$ in $\NN$, which shows that $Q(b_{p})^2 T$ is
unbounded for every $p\in \mathbb{N}.$ This contradicts the first
statement of the lemma and proves (c).
\end{proof}

Let $E$ be a complex (resp., real) Jordan triple and let $X$ be a
triple $E$-module. It is not hard to see that for every derivation $\delta: E\to X$
the mapping
$$\Theta_{\delta}  : E \to E\oplus X$$ $$a\mapsto a+\delta (a)$$ is a real linear Jordan
triple monomorphism between from the real Jordan triple $E$ to the triple split null extension $E\oplus X$.  (We observe
that, in this case, $E$ is regarded as a real Jordan triple
whenever it is a complex Jordan triple).\smallskip

When $X$ is a Jordan Banach triple $E$-module over a real or
complex JB$^*$-triple $E$, we define a norm, $\|.\|_0$, on the
triple split null extension of $E$ and $X$ by the assignment
$a+x\mapsto \|a+x\|_0:= \|a\|+\|x\|.$ The real Jordan triple
$E\oplus X$ becomes a real Jordan Banach triple. It is not hard to
see that, in this setting, a derivation $\delta$ is continuous if
and only if the triple monomorphism $\Theta_{\delta}$ is.
Moreover, the separating spaces $\sigma_{_X} (\delta)$ and
$\sigma_{_{E\oplus X}} (\Theta_{\delta})$ are linked by the the
following identity \begin{equation}\label{eq seprating spaces der
and triple monomorphism}\sigma_{_{E\oplus X}} (\Theta_{\delta}) =
\{0\}\times \sigma_{X} (\delta).
\end{equation}
Moreover,
\[
 {\rm Ann}_{E} (\sigma_{E\oplus X}
(\Theta_{\delta}))=\hbox{{\rm Ann}}_{E}
(\sigma_X(\delta)).
\]
\smallskip

The linear space $\Theta_{\delta} (E)$ is a subtriple of $E \oplus X$ and is made into
a  triple $E$-module
for the products  $$\J ab{\Theta_{\delta} (c)} = \Theta_{\delta} (\J abc ) =
\J {\Theta_{\delta} (a)}{\Theta_{\delta} (b)}{\Theta_{\delta} (c)}=
\J a{\Theta_{\delta} (b)}c,$$ $(a,b,c\in E)$. These products can be extended
to the $\|.\|_{0}$-closure, $\overline{\Theta_{\delta} (E)}$, of $\Theta_{\delta} (E)$.
Under this point of view, the mapping $\Theta_{\delta} :E \to \overline{\Theta_{\delta} (E)}$
is a  triple $E$-module homomorphism. The following result derives from the previous
Lemma \ref{l 0}, since $Q(a)\Theta_\delta=Q(a)\oplus Q(a)\delta$.

\begin{corollary}\label{c pre quotient is reflexive} Let $E$ be a
complex (resp., real) JB$^*$-triple, $X$ a Banach space which is a triple $E$-module with
continuous module operations and let $\delta: E\to X$ be a triple derivation. Then for every sequence
$(a_n)$ of mutually orthogonal  non-zero elements  in $E$,
$Q(a_n)^2 \delta$ is continuous for all but a finite number of $n$. It
follows that $a_n^{[3]}$ belongs to {\rm Ann}$_{E} (\sigma_{_X}
(\delta))$ for all but a finite number of $n$. Moreover, the set
$$\left\{ \frac{\|Q(a_n^{[3]}) \delta\|}{\|a_n\|^6} :  Q(a_n^{[3]}) \delta
\hbox{ is continuous }\right\}$$ is bounded.$\hfill\Box$
\end{corollary}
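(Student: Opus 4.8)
The plan is to derive Corollary \ref{c pre quotient is reflexive} directly from Lemma \ref{l 0} by supplying the right instance of that lemma, using the machinery already assembled just above the statement. The key observation, recorded in the line immediately preceding the corollary, is the factorisation $Q(a) \Theta_{\delta} = Q(a) \oplus Q(a) \delta$ on $E \oplus X$; equivalently, $Q(a)^2 \Theta_{\delta} = Q(a)^2 \oplus Q(a)^2 \delta$, and $Q(a^{[3]}) \Theta_{\delta} = Q(a^{[3]}) \oplus Q(a^{[3]}) \delta$. Since $Q(a)^2$ and $Q(a^{[3]})$ are bounded operators on $E$, each of these is continuous if and only if its $X$-component is. So it suffices to produce a setting in which Lemma \ref{l 0} applies to the map $T = \Theta_{\delta}$.

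First I would recall that $\Theta_{\delta} : E \to \overline{\Theta_{\delta}(E)}$ is a triple $E$-module homomorphism, where $\overline{\Theta_{\delta}(E)}$ (the $\|\cdot\|_0$-closure of $\Theta_{\delta}(E)$ inside $E \oplus X$) carries the module products $\{a,b,\Theta_{\delta}(c)\} = \Theta_{\delta}(\{a,b,c\})$ and $\{a,\Theta_{\delta}(c),b\} = \Theta_{\delta}(\{a,c,b\})$, which are continuous because the triple product of $E$ is continuous and the module operations on $X$ are continuous by hypothesis; thus $\overline{\Theta_{\delta}(E)}$ is a Banach space which is a triple $E$-module with continuous module operations, the correct target hypothesis for $Y$ in Lemma \ref{l 0}. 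For the domain, I would take $X$ in the lemma to be $E$ itself, regarded as a real triple $E$-module with its natural triple products: as noted in Section 2, $E$ then has LOP. The source map is then $\Theta_{\delta} : E \to \overline{\Theta_{\delta}(E)}$, a triple $E$-module homomorphism between a Banach triple $E$-module satisfying LOP and a Banach space that is a triple $E$-module with continuous module operations — exactly the hypotheses of Lemma \ref{l 0}.

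Applying Lemma \ref{l 0}(a) to $\Theta_{\delta}$ and a sequence $(a_n)$ of mutually orthogonal non-zero elements, $Q(a_n)^2 \Theta_{\delta}$ is continuous for all but finitely many $n$; by the factorisation above this forces $Q(a_n)^2 \delta$ to be continuous for all but finitely many $n$. Part (b) of the lemma gives $a_n^{[3]} \in \mathrm{Ann}_E(\sigma_{\overline{\Theta_{\delta}(E)}}(\Theta_{\delta}))$ for all but finitely many $n$; combined with the identity $\mathrm{Ann}_E(\sigma_{E\oplus X}(\Theta_{\delta})) = \mathrm{Ann}_E(\sigma_X(\delta))$ recorded earlier (and noting $\sigma$ is computed inside $\overline{\Theta_{\delta}(E)} \subseteq E \oplus X$), this yields $a_n^{[3]} \in \mathrm{Ann}_E(\sigma_X(\delta))$ for all but finitely many $n$. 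Finally part (c), together with the same factorisation (which gives $\|Q(a_n^{[3]})\delta\| \le \|Q(a_n^{[3]})\Theta_{\delta}\|$ and conversely up to the bounded term $\|Q(a_n^{[3]})\| \le M\|a_n\|^6$ on $E$), shows that $\{\|Q(a_n^{[3]})\delta\| / \|a_n\|^6 : Q(a_n^{[3]})\delta \text{ continuous}\}$ is bounded.

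The only point requiring care — and the step I would expect to be the main obstacle, though it is more a matter of bookkeeping than of genuine difficulty — is verifying that all the hypotheses of Lemma \ref{l 0} really are in force: that $E$-as-its-own-module has LOP (clear from Section 2), that $\overline{\Theta_{\delta}(E)}$ is genuinely a Banach triple $E$-module with continuous module operations (the closure of $\Theta_{\delta}(E)$ must be taken carefully so that the module products extend continuously), and that the separating space of $\Theta_{\delta}$ as a map into $\overline{\Theta_{\delta}(E)}$ relates to $\sigma_X(\delta)$ via $(\ref{eq seprating spaces der and triple monomorphism})$ — i.e. $\sigma_{\overline{\Theta_{\delta}(E)}}(\Theta_{\delta}) = \sigma_{E \oplus X}(\Theta_{\delta}) = \{0\} \times \sigma_X(\delta)$, which holds because $\overline{\Theta_{\delta}(E)}$ is a closed subspace of $E \oplus X$. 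Once these identifications are made, the corollary is an immediate transcription of Lemma \ref{l 0}, which is why the statement records simply that it "derives from the previous Lemma."
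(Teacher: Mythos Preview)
Your proposal is correct and follows exactly the approach the paper intends: apply Lemma \ref{l 0} with $T=\Theta_{\delta}:E\to\overline{\Theta_{\delta}(E)}$, using that $E$ (as a real triple $E$-module) satisfies LOP and that $\overline{\Theta_{\delta}(E)}$ has continuous module operations, then read off each conclusion for $\delta$ via the factorisation $Q(a)\Theta_{\delta}=Q(a)\oplus Q(a)\delta$. The paper records only this last identity as justification; you have simply spelled out the verifications it leaves implicit.
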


Let $E$ be a real or complex JB$^*$-triple. We shall say that
$E$ is \emph{algebraic} if all singly-generated subtriples
of $E$ are finite-dimensional. If in fact there exists $m\in \NN$
such that all single-generated subtriples of $X$ have dimension $\leq m$,
then $E$ is said to be of \emph{bounded degree}, and the minimum such
an $m$ will be called the \emph{degree} of $E$.

\begin{corollary}\label{c quotient is reflexive}
 Let $E$ be a
complex (resp., real) JB$^*$-triple, $X$ a Banach triple
$E$-module and let $\delta: E\to X$ be a triple derivation. Suppose
 that $\hbox{Ann}_{E}(\sigma_{_X} (\delta))$ is a norm closed
triple ideal of $E$. Then every element in
$E/\hbox{Ann}_{E}(\sigma_{_X} (\delta))$ has finite triple
spectrum, in other words, the JB$^*$-triple
$E/\hbox{Ann}_{E}(\sigma_{_X} (\delta))$ is isomorphic to a
Hilbert space or, equivalently, algebraic of bounded degree.
\end{corollary}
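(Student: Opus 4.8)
The plan is to prove Corollary~\ref{c quotient is reflexive} by showing that every element $x + \hbox{Ann}_{E}(\sigma_{_X}(\delta))$ in the quotient JB$^*$-triple generates a finite-dimensional (indeed uniformly bounded-dimensional) subtriple; the equivalence with being isomorphic to a Hilbert space and with algebraicity of bounded degree is then a known structural fact about JB$^*$-triples whose singly generated subtriples are uniformly finite-dimensional (a quotient of a JB$^*$-triple by a closed ideal is again a JB$^*$-triple, so this applies). Write $\C J := \hbox{Ann}_{E}(\sigma_{_X}(\delta))$, a norm-closed triple ideal by hypothesis, and let $\pi : E \to E/\C J$ be the quotient map. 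Fix $x \in E$ with $\|x\|=1$, and work inside $E_x \cong C_0(L)$ (real or complex), where $L \subseteq (0,1]$ is the triple spectrum of $x$ and $\Psi(x)(t)=t$.

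First I would argue by contradiction: suppose $\pi(x)$ has infinite triple spectrum, or more generally suppose no uniform bound $m$ exists across all norm-one $x$. The key mechanism is Corollary~\ref{c pre quotient is reflexive}: for any sequence $(a_n)$ of mutually orthogonal non-zero elements of $E$, all but finitely many $a_n^{[3]}$ lie in $\hbox{Ann}_{E}(\sigma_{_X}(\delta)) = \C J$, and moreover $\{ \|Q(a_n^{[3]})\delta\|/\|a_n\|^6 : Q(a_n^{[3]})\delta \text{ continuous}\}$ is bounded. If $\pi(x)$ had infinite triple spectrum, then the spectrum of $x$ in $E_x \cong C_0(L)$ would contain infinitely many points of $L$ bounded away from $0$ that do not get "killed" in the quotient; using the functional calculus in $E_x$ one can then manufacture an infinite sequence of mutually orthogonal norm-one elements $a_n \in E_x$ (supported near distinct spectral values) such that $a_n^{[3]} \notin \C J$ for infinitely many $n$ — since $a_n^{[3]}$ is, up to the isomorphism $\Psi$, supported on the same piece of the spectrum that survives in the quotient, and membership in $\C J = \hbox{Ann}_{E}(\sigma_{_X}(\delta))$ forces $Q(a_n^{[3]})$ to annihilate $\sigma_{_X}(\delta)$, which fails. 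This contradicts the "all but finitely many" conclusion of Corollary~\ref{c pre quotient is reflexive}.

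More carefully, the step I expect to be the main obstacle is passing cleanly between the spectral picture of $x$ in $E_x$ and the spectral picture of $\pi(x)$ in $(E/\C J)_{\pi(x)}$, and in particular controlling how the ideal $\C J$ intersects $E_x$. The relevant fact is that $\C J \cap E_x$ is a closed triple ideal of $E_x \cong C_0(L)$, hence corresponds to $C_0(U)$ for an open subset $U \subseteq L$, and the triple spectrum of $\pi(x)$ is (essentially) $\overline{L \setminus U}$ with $0$ adjoined; if this is infinite, or if its cardinality is unbounded as $x$ ranges over the unit sphere, we obtain infinitely many (resp. arbitrarily many) mutually orthogonal normalized bump functions in $E_x$ whose cubes stay outside $\C J$. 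One must also invoke the quantitative clause (c) of Corollary~\ref{c pre quotient is reflexive}: even if only finitely many cubes escape $\C J$ for each fixed orthogonal sequence, a diagonal construction across different $x$'s combined with the uniform bound on $\|Q(a_n^{[3]})\delta\|/\|a_n\|^6$ rules out unbounded degree. Once finite bounded degree is established, I would cite the structure theory (e.g. via the classification of JBW$^*$-triples, or directly: a JB$^*$-triple all of whose elements have triple spectrum a single point is a Hilbert space regarded as a type $1_{1,n}$ or spin-type triple) to conclude the "isomorphic to a Hilbert space" reformulation, thereby completing the proof.
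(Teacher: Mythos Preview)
Your approach to the core step---showing that each element of the quotient has finite triple spectrum by manufacturing mutually orthogonal bump functions in $E_x \cong C_0(L)$ whose cubes escape $\mathcal{J}$ and then invoking part (b) of Corollary~\ref{c pre quotient is reflexive}---is exactly what the paper does. The paper handles the point you flag as the main obstacle by identifying $E_a/(\mathcal{J}\cap E_a) \cong C_0(\Lambda)$, where $\Lambda$ is the common zero set of $\mathcal{J}\cap E_a$ in $L$ (via \cite[Proposition~3.10]{FriRus83}); this is the dual description of your $\overline{L\setminus U}$.

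Where your proposal diverges is in the passage from ``every element has finite spectrum'' (algebraic) to ``bounded degree / Hilbert space''. The paper does \emph{not} argue this via clause (c) and a diagonal construction; it simply cites \cite[\S 4]{BuChu} and \cite[Theorems~3.1 and~3.8]{BeLoPeRo} once algebraicity is in hand. Your proposed diagonal argument has a gap: Corollary~\ref{c pre quotient is reflexive} applies only to a single sequence of \emph{mutually orthogonal} elements, but bump functions extracted from different subtriples $E_{x_k}$ (for varying $x_k$) have no reason to be orthogonal in $E$, so they cannot be spliced into an admissible sequence. The quantitative bound (c) does not rescue this, since it too is stated for a fixed orthogonal sequence. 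The clean fix is to do what the paper does: establish algebraicity element by element and then invoke the cited structure theorems for the uniform bound and the Hilbert-space identification.
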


\begin{proof}
Let $\overline{a}$ be an element in the JB$^*$-triple
$F=E/\hbox{Ann}_{E}(\sigma_{_X} (\delta))$. Let $I_{a}$ denote the
intersection of $E_a$ with $\hbox{Ann}_{E}(\sigma_{_X} (\delta))$.
It is clear that $I_a$ is a norm closed triple ideal of $E_a$.
Moreover, the subtriple $F_{\overline{a}}$ is JB$^*$-triple
isomorphic to the quotient of $E_a$ with $I_a$.\smallskip

$E_a$ is JB$^*$-triple isomorphic (and hence isometric) to $C_0
(L)= C_0(L,\CC)$ (resp., $C_0 (L)= C_0(L,\RR)$) for some locally
compact Hausdorff space $L\subseteq (0,\|a\|]$ (called the triple spectrum of $a$)
such that $L\cup\{0\}$ is compact. We shall identify $E_a$ with $C_0(L)$.  It is known (compare
\cite[Proposition 3.10]{FriRus83}) that $E_a/I_a\cong C_0(\Lambda)$ where
 $$\Lambda =\{ t\in L : b
(t) = 0, \hbox{ for every } b\in I_a\}.$$

 We claim that the  set $\Lambda$ is finite. Otherwise,
there exists an infinite sequence $(t_n)$ in $\Lambda$.  We find a
sequence $(f_n) $ of mutually orthogonal elements in $C_0 (L)$
such that $f_n (t_n) \neq 0$ and hence $f_n\not\in I_a$ and $f_n^{[3]}\not \in I_a$.  Since orthogonality is a ``local''
concept, $(f_n)$ is a sequence of mutually orthogonal elements in
$E$ and $(f_n^{[3]})\not\in\hbox{Ann}_{E}(\sigma_{_X} (\delta)),$ we have a contradiction to  Corollary
\ref{c pre quotient is reflexive}.\smallskip

It follows that $E_a/I_a\cong F_{\overline{a}}$ is finite
dimensional.  The final statement follows from \cite[\S 4]{BuChu}
and \cite[\S 3, Theorems 3.1 and 3.8]{BeLoPeRo}.
\end{proof}

\subsection{Automatic continuity results}

Our main result  (Theorem \ref{t main complex to module}) will be proved in two steps, the first being the following proposition.

\begin{proposition}\label{p 1} Let $E$ be a complex (resp., real)
JB$^*$-triple, $X$ a Banach triple $E$-module, and let $\delta:
E\to X$ be a triple derivation. Assume that
$\hbox{Ann}_{E}(\sigma_{_X} (\delta))$ is a (norm-closed) linear
subspace of $E$ and that in the triple split null extension
$E\oplus X$,
\begin{equation}\label{eq 2 in prop p1}
\J {\hbox{Ann}_{_{E}} (\sigma_{_X}
(\delta))}{\hbox{Ann}_{_{E}} (\sigma_{_X} (\delta))}{\sigma_{_X}
(\delta)} = 0.\end{equation}
 Then $\delta|_{\hbox{Ann}_{E}(\sigma_{_X}
(\delta))} : \hbox{Ann}_{E}(\sigma_{_X} (\delta)) \to X$ is
continuous.
\end{proposition}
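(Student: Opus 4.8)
The plan is to prove the sharper, quantitative fact that $\|\delta(a)\|\le K\,\|a\|$ for all $a$ in $A:=\hbox{Ann}_{E}(\sigma_{_X}(\delta))$, with $K$ depending only on $E$, $X$ and the universal constant from separating‑space theory. Throughout write $S:=\sigma_{_X}(\delta)$.

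First I would use the linear‑subspace hypothesis to see that $A$ is a norm‑closed inner ideal: by $(\ref{eq submodule 1})$ and $(\ref{eq submodule 2})$ together with the symmetry of the triple product in its outer variables one gets $\{A,E,A\}\subseteq A$, so $a^{[2n+1]}\in A$ for every $n$ and hence $E_a=\overline{\hbox{span}}\{a^{[2n+1]}:n\ge0\}\subseteq A$ whenever $a\in A$. The payoff is that every $a\in A$ is a triple cube of an element of $A$: taking $b:=a^{[1/3]}\in E_a$, the element of $E_a\cong C_0(L)$ corresponding to $t\mapsto t^{1/3}$ (a genuine element of $E$, since $a$ corresponds to the positive function $t\mapsto t$), we have $b\in A$, $\|b\|=\|a\|^{1/3}$ and $\{b,b,b\}=a$.

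The heart of the argument is then the following. The quotient map $q\colon X\to X/S$ has $\|q\|\le1$ and, since $\ker q=S=\sigma_{_X}(\delta)$, the composition $q\delta$ is continuous; by the uniform estimate for separating spaces (\cite[Lemma 1.3]{Sin76}) there is thus a universal constant $M$ with $\|q(\delta(c))\|\le M\,\|c\|$ for all $c\in E$. Now fix $a\in A$, put $b=a^{[1/3]}\in A$, and apply the derivation identity to $a=\{b,b,b\}$; using $(JTM2)$ this gives
\[
\delta(a)=2\,\{b,b,\delta(b)\}_3+Q(b)(\delta(b)).
\]
Because $b\in A=\hbox{Ann}_{E}(S)$ we have $Q(b)(S)=0$, and because of hypothesis $(\ref{eq 2 in prop p1})$ we have $\{b,b,S\}_3=0$. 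Hence both bounded operators $Q(b)\colon X\to X$ and $x\mapsto\{b,b,x\}_3$ annihilate $S$, so each factors through $X/S$ with norm at most a constant times $\|b\|^2$ (joint continuity of the module products). Evaluating at $\delta(b)$ and using $\|q(\delta(b))\|\le M\|b\|$, each of the two summands above has norm $\le C\,\|b\|^2\cdot M\|b\|=CM\|a\|$, so $\|\delta(a)\|\le 3CM\,\|a\|$, which proves the proposition.

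The step I expect to be the main obstacle — and where both hypotheses are used — is controlling $\{b,b,\delta(b)\}_3$. Iterating the cube‑root identity only gives an inequality of the shape $\|\delta(a)\|\le K'\,\|a\|^{2/3}\,\|\delta(a^{[1/3]})\|$, which does not close up, and a priori the term involves the uncontrolled vector $\delta(b)$. Hypothesis $(\ref{eq 2 in prop p1})$ is precisely what forces $\{b,b,S\}_3=0$, so that $\{b,b,\delta(b)\}_3$ in fact depends only on the controlled quantity $q(\delta(b))$; the linear‑subspace hypothesis serves only to guarantee that $a^{[1/3]}$ lies back in $A$. (For a complex JB$^*$‑triple $\delta$ is conjugate linear, but the whole computation goes through verbatim.)
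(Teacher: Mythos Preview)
Your proof is correct and shares the two essential ideas with the paper's argument: (i) use both hypotheses to show that, for $b\in A$, the bounded operators $Q(b)$ and $L(b,b)$ on $X$ kill $S$; and (ii) conclude via the cube-root trick $a=b^{[3]}$ with $\|b\|^3=\|a\|$. Where you differ is in the middle step. The paper first deduces from $S\subseteq\ker L(a,b)\cap\ker Q(a,b)$ (for $a,b\in A$) that $L(a,b)\delta$ and $Q(a,b)\delta$ are continuous, then uses the derivation identity to get continuity of $\delta L(a,b)$ and $\delta Q(a,b)$, and finally invokes the uniform boundedness principle (twice) to obtain a joint bound $\|\delta\{a,b,c\}\|\le M\|a\|\,\|b\|\,\|c\|$ on $A^3$. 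You bypass this by taking $R=q\colon X\to X/S$ in Sinclair's lemma to get the single estimate $\|q\delta\|\le M$, and then observing that $Q(b)$ and $L(b,b)$ factor through $X/S$ with norm $\le C\|b\|^2$, which yields the bound $\|\delta(a)\|\le 3CM\|a\|$ in one line. Your route is shorter and gives an explicit constant without appealing to the UBP; the paper's route establishes the (formally stronger, though not needed) joint continuity of $(a,b,c)\mapsto\delta\{a,b,c\}$ on $A^3$. One cosmetic remark: the constant $M$ from Sinclair's lemma depends on $\delta$ (it is tied to the closed graph of $\delta$), so your phrase ``depending only on $E$, $X$'' is a slight overstatement, but this does not affect the argument.
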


\begin{proof} By Lemma \ref{l separating spaces of a derivation}, $\sigma_{_X} (\delta)$
is a triple $E$-submodule of $X$. Since we are assuming  that $\hbox{Ann}_{E}(\sigma_{_X} (\delta))$ is a norm-closed
subspace of $E$, as we have seen, it is a norm-closed triple ideal of $E$.
\smallskip

Fix two arbitrary elements $a,b$ in $\hbox{Ann}_{E}(\sigma_{_X}
(\delta))$.  Since $a+b\in \hbox{Ann}_{E}(\sigma_{_X} (\delta))$, for every  $x$ in $\sigma_{_X} (\delta)$,
we have\[  2 \J a{x}b = \J
{a+b}{x}{a+b} - \J a{x}a - \J bxb =0,
\]

  Hence, in addition to our assumption (\ref{eq 2 in prop p1}),
 we also have
\[
 \J a{x}b =0, \hbox{ for every  } x \in
\sigma_{_X} (\delta),\ a,b\in \hbox{Ann}_{E}(\sigma_{_X}
(\delta)),
\]
that is,
\begin{equation}
\label{eq 1 in prop p1}
\J {\hbox{Ann}_{_{E}} (\sigma_{_X} (\delta))}   {\sigma_{_X} (\delta)}{\hbox{Ann}_{_{E}} (\sigma_{_X} (\delta)} = 0.
\end{equation}

Considering $L(a,b)$ and $Q(a,b)$ as linear mappings from $X$ to
$X$ defined by $L(a,b) (x) = \J abx$ and $Q(a,b) (x) = \J axb$
($x\in X$), we deduce from  $(\ref{eq 2 in prop p1})$, $(\ref{eq 1
in prop p1})$ that $\sigma_X(\delta)\subset \ker L(a,b)\cap \ker
Q(a,b)$ and therefore that $L(a,b) \delta, Q(a,b) \delta : E \to
X$ are continuous operators for every  $a,b\in
\hbox{Ann}_{E}(\sigma_{_X} (\delta))$.
\smallskip

When $L(a,b)$ and $Q(a,b)$ as considered as (real) linear operators from $E$ to $E$,
the compositions $\delta L(a,b) $ and $\delta Q(a,b)$ satisfy the identities
\begin{eqnarray*}\delta L(a,b) (c)& =& \J {\delta(a)}{b}c + \J a{\delta(b)}c + \J ab{\delta(c)}\\
&=& L(\delta a,b)(c)+L(a,\delta b)(c)+L(a,b)\delta (c)
\end{eqnarray*}
and
\begin{eqnarray*}
\delta Q(a,b) (c)& =& \J {\delta(a)}{c}b + \J a{\delta(c)}b + \J ac{\delta(b)}\\
&=& Q(\delta a,b)(c)+Q(a,b)\delta (c)+Q(a,\delta b)(c).
\end{eqnarray*}
for an arbitrary $c\in E$. Since $X$ is a Banach triple
$E$-module, the continuity of $L(a,b) \delta$ and $ Q(a,b) \delta$
as operators from  $E$ to $X$ implies that the mappings $c\mapsto
\delta (\J abc)$ and $c\mapsto \delta (\J acb)$ are continuous
linear operators from $E$ to $X$.\smallskip

Let $W: \hbox{Ann}_{E}(\sigma_{_X} (\delta))\times
\hbox{Ann}_{E}(\sigma_{_X} (\delta))\times
\hbox{Ann}_{E}(\sigma_{_X} (\delta)) \to X$  be the real trilinear
mapping defined by $ W(a,b,c) := \delta (\J abc)$. We have already
seen that $W$ is separately continuous whenever we fix two of the
variables in $(a,b,c)\in \hbox{Ann}_{E}(\sigma_{_X}
(\delta))\times \hbox{Ann}_{E}(\sigma_{_X} (\delta))\times
\hbox{Ann}_{E}(\sigma_{_X} (\delta))$. By repeated application of the uniform boundedness
principle, $W$ is (jointly) continuous. Therefore, there exists a positive
constant $M$ such that $\|\delta \J abc \| \leq M\ \|a\| \ \|b\| \
\|c\|$, for every $a,b,c\in \hbox{Ann}_{E}(\sigma_{_X} (\delta))$.

Finally, since $\hbox{Ann}_{E}(\sigma_{_X} (\delta))$ is a
JB$^*$-subtriple of $E$, for each $a$ in \linebreak
$\hbox{Ann}_{E} (\sigma_{_X} (\delta))$, there exists $b$ in
$\hbox{Ann}_{E}(\sigma_{_X} (\delta))$ satisfying that $b^{[3]}
=a$. In this case $$\|\delta (a) \| = \|\delta \J bbb \| \leq M \
\|b\|^3 = M \|\J bbb\| = M \ \|a\|,$$ which shows that the
restriction of $\delta$ to ${\hbox{Ann}_{E}(\sigma_{_X} (\delta))}$ is
continuous.
\end{proof}

We can state now the main results of the paper.

\begin{theorem}\label{t main complex to module} Let $E$ be a complex
JB$^*$-triple, $X$ a Banach triple $E$-module, and let $\delta:
E\to X$ be a triple derivation. Then $\delta$ is continuous if and
only if $\hbox{Ann}_{E}(\sigma_{_X} (\delta))$ is a (norm-closed)
linear subspace of $E$ and $$\J {\hbox{Ann}_{_{E}} (\sigma_{_X}
(\delta))}{\hbox{Ann}_{_{E}} (\sigma_{_X} (\delta))}{\sigma_{_X}
(\delta)} = 0,$$ in the triple split null extension $E\oplus X$.
\end{theorem}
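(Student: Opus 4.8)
The plan is to prove the two directions separately, with the forward (necessity) direction being the quick one and the backward (sufficiency) direction reducing essentially to the work already done.

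First I would dispatch necessity. If $\delta$ is continuous, then $\sigma_{_X}(\delta) = \{0\}$ by the closed-graph characterization of the separating space quoted in the text. Hence $\hbox{Ann}_{E}(\sigma_{_X}(\delta)) = \hbox{Ann}_{E}(\{0\}) = E$, which is certainly a norm-closed linear subspace, and the triple product condition $\{\hbox{Ann}_{E}(\sigma_{_X}(\delta)), \hbox{Ann}_{E}(\sigma_{_X}(\delta)), \sigma_{_X}(\delta)\} = \{E,E,\{0\}\} = 0$ holds trivially. This is a one-line argument.

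For sufficiency, assume $N := \hbox{Ann}_{E}(\sigma_{_X}(\delta))$ is a norm-closed linear subspace and that $\{N,N,\sigma_{_X}(\delta)\} = 0$ in $E \oplus X$. The strategy is: (1) invoke Proposition~\ref{p 1}, whose hypotheses are exactly what we have assumed, to conclude that $\delta|_{N}$ is continuous, equivalently $\sigma_{_X}(\delta) \cap \delta(N) $ behaves well — more precisely $\delta|_N$ bounded means $Q(a)\delta$ is continuous for every $a \in N$, consistent with the identification $N = \{a \in E : Q(a)\delta \hbox{ is continuous}\}$ noted after Lemma~\ref{l separating spaces of a derivation}; (2) show that $N$ must in fact be \emph{all} of $E$, which forces $\sigma_{_X}(\delta) = \{0\}$ and hence $\delta$ continuous. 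For step (2), since $E$ is a JB$^*$-triple and $N$ is a norm-closed linear subspace contained in (and equal to) the quadratic annihilator, $N$ is a norm-closed triple ideal of $E$ (as recorded on p.~\pageref{ref ann is an ideal}). Now apply Corollary~\ref{c quotient is reflexive}: the quotient $E/N$ is algebraic of bounded degree, in particular every element has finite triple spectrum. The key point is then to argue that such a quotient cannot support the "bad" part of $\delta$: one uses the main-gap/stability machinery — a derivation into a Banach module, when restricted to or factored through a JB$^*$-triple in which every singly generated subtriple is finite-dimensional, is automatically continuous, because on a finite-dimensional subtriple a linear map is trivially bounded and the bounded-degree uniformity together with Corollary~\ref{c pre quotient is reflexive} controls the norms. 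Combined with continuity of $\delta|_N$ from step (1), continuity on $E/N$ lifts to continuity of $\delta$ on $E$ via a standard separating-space argument: $\sigma_{_X}(\delta) \subseteq$ (image of $\sigma$ of the induced map on $E/N$) $= \{0\}$.

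The main obstacle I anticipate is step (2): cleanly passing from "$E/N$ is algebraic of bounded degree" to "$\delta$ is continuous on $E$." One must be careful that $\delta$ does not literally factor through $E/N$ (it is defined on $E$, and $N$ is only where $\delta$ is already known bounded), so the argument is really about the \emph{induced} behavior modulo $N$. The right framework is: the quotient map $\pi: E \to E/N$ and the fact that $\sigma_{_X}(\delta)$, being a submodule on which $N$ acts by zero via $Q$, yields a derivation-like object on $E/N$ into $\sigma_{_X}(\delta)$; bounded degree of $E/N$ then kills this object. I would expect to need one more lemma, or an appeal to the results of \cite{BeLoPeRo} and \cite{BuChu} cited in Corollary~\ref{c quotient is reflexive}, to make the "bounded degree $\Rightarrow$ no discontinuity" step rigorous, together with a routine but slightly fiddly separating-space bookkeeping to assemble the final conclusion from the two pieces.
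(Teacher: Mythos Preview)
Your overall architecture is right: necessity is the one-liner you give, and for sufficiency you correctly invoke Proposition~\ref{p 1} to get $\delta|_{N}$ continuous and Corollary~\ref{c quotient is reflexive} to make $E/N$ algebraic of bounded degree, with the goal of showing $N=E$. Where you go astray is in the mechanism for that last step.

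Your two proposed routes for step (2) both fail. First, there is no ``induced map on $E/N$'': $\delta$ is not constant on cosets of $N$ (you only know $\delta|_N$ is \emph{bounded}, not zero), so nothing factors through the quotient and the line $\sigma_{_X}(\delta)\subseteq(\hbox{image of }\sigma\hbox{ of the induced map on }E/N)$ has no meaning. Second, the ``finite-dimensional subtriples plus uniform bounds'' idea is aimed at the wrong object: it is $E/N$, not $E$, whose singly generated subtriples are finite-dimensional, and $\delta$ lives on $E$. Corollary~\ref{c pre quotient is reflexive} gives uniform control of $\|Q(a_n^{[3]})\delta\|$ only along \emph{orthogonal} sequences, which is what feeds into Corollary~\ref{c quotient is reflexive}; it does not give a uniform bound over all subtriples of bounded dimension.

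The missing idea is concrete and uses the complex hypothesis in an essential way. Let $\overline{e}$ be a minimal tripotent in $E/N$ with representative $e\in E$. Because $E$ is complex, $(E/N)_2(\overline{e})=\CC\,\overline{e}$, so for any $a\in E$ one has $\pi(Q(e)a)=Q(\overline{e})\pi(a)\in\CC\,\overline{e}$; that is, $Q(e)a-\mu\,e\in N$ for some scalar $\mu$ depending continuously on $a$ (via $\pi$ and the Peirce-2 projection). Hence if $a_n\to 0$ then $Q(e)a_n=\mu_n e+j_n$ with $\mu_n\to 0$ and $j_n\to 0$ in $N$, and
\[
\delta(Q(e)a_n)=\mu_n\,\delta(e)+\delta(j_n)\longrightarrow 0
\]
by continuity of $\delta|_N$. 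Thus $\delta Q(e)$ is continuous, i.e.\ $e\in N$, so $\overline{e}=0$. Since every element of $E/N$ is a finite sum of multiples of minimal tripotents, $E/N=\{0\}$ and $N=E$. This is exactly where the complex hypothesis enters (via $\dim_{\CC}(E/N)_2(\overline{e})=1$), and it is the reason the paper must argue separately for reduced real JB$^*$-triples in Proposition~\ref{thm PeRu real}.
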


\begin{proof}
If $\delta$ is continuous $\hbox{Ann}_{E}(\sigma_{_X} (\delta)) =
 \hbox{Ann}_{E}(\{0\}) = E$ is a linear subspace of $E$ and $\J EE{0} = 0$.

Conversely, let us suppose that $E$ is a complex JB$^*$-triple and
that\linebreak $\hbox{Ann}_{E}(\sigma_{_X} (\delta))$ is a
norm-closed subspace of $E$ and hence a norm-closed triple ideal
of $E$.\smallskip

 In order to
simplify notation, we denote $J = \hbox{Ann}_{E}(\sigma_{_X}
(\delta))$, while the projection of $E$ onto $E/J$ will be denoted
by $a\mapsto \pi (a)=\overline{a}$.\smallskip

By Corollary \ref{c quotient is reflexive}, $E/J$ is algebraic of
bounded degree $m$. Thus, for each element $\overline{a}$ in $E/J$
there exist mutually orthogonal minimal tripotents
$\overline{e}_1,\ldots, \overline{e}_k$ in $E/J$ and
$0<\lambda_1\leq \ldots \leq \lambda_k$ with $k\leq m$ such that
$\overline{a} = \sum_{j=1}^{k} \lambda_j \overline{e}_j$. We shall
show in the next two paragraphs that
 $e_1,\ldots, e_k \in J$, and
hence, $a\in J$. This will show that $E=J$ and application of Proposition~\ref{p 1} will complete  the proof.

Suppose that $\overline{e}$ is a minimal tripotent in $E/J$, where
$e\in E$ is a representative in the class $\overline{e}$. In this
case $\left(E/J\right)_2 (\overline{e}) = \CC \overline{e}$. Take
an arbitrary sequence $(a_n)$ converging to $0$ in $E$. For each
natural $n$, there exists a scalar $\mu_n\in \CC$ such that $$\pi
(Q(e) (a_n) ) = Q(\overline{e}) (\pi (a_n)) =Q(\overline{e})
(\overline{a}_n) = \mu_n \overline{e} =\pi (\mu_n e).$$ The
continuity of $\pi$ and the Peirce 2 projection $P_{2}
(\overline{e})$ assure that $\mu_n \to 0$. It follows that the
sequence $Q(e) (a_n) - \mu_n e$ lies in $J$ and tends to zero in
norm.\smallskip

By Proposition \ref{p 1}, $\delta|_{J}$ is continuous. Therefore,
$$\delta (Q(e) (a_n) )= \delta (Q(e) (a_n) - \mu_n e) +\mu_n
\delta (e) \to 0.$$ Since $(a_n)$ is an arbitrary norm null
sequence in $E$, the linear mapping $\delta Q(e) : E \to X$ is
continuous, and hence $e\in \hbox{Ann}_{E}(\sigma_{_X} (\delta)) =
J$, or equivalently, $\overline{e} =0$.\smallskip
\end{proof}

Let $E$ be a real JB$^*$-triple.  By \cite[Proposition
2.2]{IsKaRo95}, there exists a unique complex JB$^*$-triple
structure on the complexification $\widehat {E} = E\oplus i\ E$,
and a unique conjugation (i.e., conjugate-linear isometry of
period 2) $\tau$ on $\widehat {E}$ such that $E=\widehat {E}
^{\tau} := \{x\in \widehat {E} : \tau (x)=x \}$, that is, $E$ is a
real form of a complex JB$^*$-triple. Let us consider $$
\tau^\sharp : \E^{*} \rightarrow \E^{*}$$ defined by
$$\tau^\sharp (f) (z) = \overline{f (\tau (z))}.$$ The mapping $\tau^\sharp$
is a conjugation on $\E^{*}$. Furthermore the map
$$(\E^{*})^{\tau^\sharp}
\longrightarrow (\E^{\tau})^{*}\ (=E^*)$$ $$f \mapsto f|_{E}$$ is an
isometric bijection, where $(\E^{*})^{\tau^\sharp} := \{ f \in
\E^{*} : \tau^\sharp (f)=f \}$ (compare \cite[Page
316]{IsKaRo95}).\smallskip

\begin{remark}
\label{r complexification of a derivation}{\rm Let $\delta : E \to E^*$ be a triple derivation
from a real JB$^*$-triple to its dual.
It is not hard  (but tedious) to see that, under the identifications given in the above paragraph,
the mapping $\widehat{\delta} : \widehat {E} \to \widehat{E}^*$,
$\widehat{\delta} (x+i y) := \delta (x) - i \delta (y)$ is conjugate-linear and a
 triple derivation from $\widehat {E}$ to
$\widehat{E}^*$, when the latter is seen as a triple $E$-module.\smallskip

Actually, although the calculations are tedious, the triple products of every
real triple $E$-module, $X$, can be appropriately extended to its algebraic
complexification $\widehat{X} = X\oplus i X$ to make the latter a complex triple
$\widehat{E}$-module. Further, every (real linear) triple derivation $\delta : E \to X$ can be
extended to a (conjugate linear) triple derivation $\widehat{\delta} : \widehat {E} \to \widehat{X}$.}
\end{remark}

\begin{corollary}
\label{c automatic continuity} Let $E$ be a real or complex
JB$^*$-triple. \begin{enumerate}[$(a)$] \item Every derivation
$\delta : E \to E$ is continuous. \item Every derivation $\delta :
E \to E^*$ is continuous.
\end{enumerate}
\end{corollary}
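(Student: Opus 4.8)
\textbf{Proof proposal for Corollary \ref{c automatic continuity}.}

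The plan is to reduce everything to Theorem~\ref{t main complex to module} together with the complexification device in Remark~\ref{r complexification of a derivation}. First I would dispose of the complex case of (b). Let $E$ be a complex JB$^*$-triple and $\delta : E \to E^*$ a triple derivation, where $E^*$ carries the canonical triple $E$-module structure of $(\ref{eq module product dual 1})$--$(\ref{eq module product dual 2})$. This module satisfies LOP, as was observed in Section~2 (orthogonal elements of $E$ lift to orthogonal elements of $E\oplus E^*$). By Lemma~\ref{l separating spaces of a derivation}, $\sigma_{E^*}(\delta)$ is a norm-closed triple $E$-submodule of $E^*$. Now invoke Proposition~\ref{p annihilator submodule dual} with $S = \sigma_{E^*}(\delta)$: part (a) says $\hbox{Ann}_E(\sigma_{E^*}(\delta))$ is a norm-closed triple ideal of $E$ — in particular a linear subspace — and part (c) says precisely that $\J{\hbox{Ann}_E(\sigma_{E^*}(\delta))}{\hbox{Ann}_E(\sigma_{E^*}(\delta))}{\sigma_{E^*}(\delta)} = 0$ in the split null extension $E\oplus E^*$. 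These are exactly the two hypotheses of Theorem~\ref{t main complex to module}, so $\delta$ is continuous.

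For the complex case of (a), regard $E$ as a real triple $E$-module with its natural products; as noted in Remark~\ref{r real and complex JB$^*$-triples OAP}, this module satisfies LOP and its norm-closed triple submodules coincide with its norm-closed triple ideals. A derivation $\delta:E\to E$ is linear, and $\sigma_E(\delta)$ is a norm-closed subtriple — in fact a triple ideal, being a submodule. By Lemma~\ref{l annihilator triple ideal}, $\hbox{Ann}_E(\sigma_E(\delta)) = \sigma_E(\delta)^\perp$ is a norm-closed triple ideal, hence linear; and since it is orthogonal to $\sigma_E(\delta)$, the triple product $\J{\hbox{Ann}_E(\sigma_E(\delta))}{\hbox{Ann}_E(\sigma_E(\delta))}{\sigma_E(\delta)}$ vanishes (this is the content of Remark~\ref{r real and complex JB$^*$-triples OAP}). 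Again Theorem~\ref{t main complex to module} applies and gives continuity. One small wrinkle: Theorem~\ref{t main complex to module} is stated for a triple derivation from a complex JB$^*$-triple, and a derivation $\delta:E\to E$ is complex-linear rather than conjugate-linear; but the theorem's hypotheses and proof only use that $\delta$ is additive, real-linear, and satisfies the derivation identity, so it covers this situation — this point should be remarked on.

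Finally, the real case of both (a) and (b) is handled by complexification. Given a real JB$^*$-triple $E$ with complexification $\widehat E = E\oplus iE$, and a triple derivation $\delta: E\to X$ with $X = E$ or $X = E^*$, Remark~\ref{r complexification of a derivation} produces a conjugate-linear triple derivation $\widehat\delta : \widehat E \to \widehat X$, where $\widehat X = \widehat E$ (in case (a)) or $\widehat X = \widehat E^*$ under the identification $(\widehat E^*)^{\tau^\sharp}\cong E^*$ (in case (b)). By the complex cases already proved, $\widehat\delta$ is continuous on $\widehat E$; restricting to the closed real subspace $E = \widehat E^\tau$ recovers $\delta$ (the defining formula $\widehat\delta(x+iy) = \delta(x) - i\delta(y)$ gives $\widehat\delta|_E = \delta$), so $\delta$ is continuous as well. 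I expect the main obstacle to be purely bookkeeping: verifying that the complexified module structure and the complexified map in Remark~\ref{r complexification of a derivation} genuinely satisfy the triple-module axioms and the derivation identity, and that the norm on $\widehat X$ is equivalent to one making the restriction to $E$ isometric — but all of this is asserted (if only sketched) in that remark, so for the purposes of this corollary it can be cited.
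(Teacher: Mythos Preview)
Your proposal is correct and follows essentially the same route as the paper: the complex case is obtained from Proposition~\ref{p annihilator submodule dual} (together with Remark~\ref{r real and complex JB$^*$-triples OAP} and Lemma~\ref{l annihilator triple ideal} for part (a)) combined with Theorem~\ref{t main complex to module}, and the real case is reduced to the complex one via Remark~\ref{r complexification of a derivation}. Your treatment of the linear-versus-conjugate-linear wrinkle in part (a) matches the paper's parenthetical remark that one should regard $E$ as a real triple and real $E$-module and $\delta$ as real-linear when invoking Theorem~\ref{t main complex to module}.
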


\begin{proof}
The proof in the complex case follows now from Proposition \ref{p annihilator submodule dual}
and Theorem \ref{t main complex to module}.  (In Theorem~\ref{t main complex to module}, we consider $E$ as a real triple and as a real $E$-module, and $\delta$ as a real-linear map.) The statements in the real setting are,
by Remark \ref{r complexification of a derivation}, direct consequences of the corresponding results
in the complex case.
\end{proof}

The first statement of the above corollary was already established
in \cite[Corollary 2.2]{BarFri} and \cite[Remark 1]{HoMarPeRu}.
The proof given here is completely independent.
The second statement is new and will be important for a forthcoming study by the authors of weak amenability for JB$^*$-triples.
\smallskip

Recall that every derivation of a complex C$^*$-algebra $A$ into
a  Banach $A$-bimodule is automatically continuous \cite{Ringrose72}.
The class of Banach triple modules over real or complex JB$^*$-triples
is strictly wider  than the class of Banach  bimodules over C$^*$-algebras.
Our next remark shows that, in the more general setting of triple derivations
from real or complex JB$^*$-triples to Banach triple modules the continuity  is not,
in general, automatic.

\begin{remark}\label{r last}{\rm Let $H$ be a real Hilbert space with inner product denoted by $(.,.)$.
Suppose that  dim$(H)\geq 2$. Let $J$ denote the Banach space $\CC 1 \oplus^{\ell_1} H$.
It is known that $J$ is a JB-algebra with respect to the product $$(\lambda_1 1+ a_1) \circ (\lambda_2 1+ a_2):= \lambda_1  a_2 + \lambda_2  a_1 + (\lambda_1 \lambda_2 + (a_1,a_2) ) 1.$$ The JB-algebra $(J,\circ)$ is called a \emph{spin factor} (see \cite{Hanche}). It follows that $J$ is a real JB$^*$-triple via $\J abc := (a\circ b) \circ c + (c\circ b) \circ a - (a\circ c) \circ b, (a,b,c\in J).$\smallskip

It was already noticed by Hejazian and Niknan (see \cite[Definition 3.2]{HejNik96}) that every Banach space $X$ can be considered as a (degenerate) Jordan $J$-module with respect to the products $$ (\lambda_1 1+ a_1) \circ x= x \circ (\lambda_1 1+ a_1)= \lambda_1 x, \  (x\in X, \lambda_1\in \mathbb{R}, a_1\in H). $$  Since every linear mapping $D: J \to X $ with $D(1) =0$ is a Jordan derivation (i.e. $D( a \circ b) = D(a)\circ b + a\circ D(b)$, $\forall a,b\in J$), for every infinite dimensional spin factor $J$, there exists a discontinuous derivation from $J$ to a degenerate Jordan $J$-module. \smallskip

Each degenerate Banach Jordan $J$-module $X$ is a Banach triple $J$-module with respect to  $\J abx :=  (a\circ b) \circ x + (x\circ b) \circ a -(a\circ x) \circ b$ and  $\J axb = (a\circ x) \circ b + (b\circ
x) \circ a - (a\circ b) \circ x$ ($a,b\in J$, $x\in X$), and each linear mapping $\delta : J \to X$ with $\delta (1) =0$  is a triple derivation. Thus, for each infinite dimensional spin factor $J$ there exists a discontinuous triple derivation from $J$ to a Banach triple $J$-module.
}\end{remark}

\section{Derivations on a C$^*$-algebra}

A celebrated result due to J.R. Ringrose establishes that every
(associative) derivation from a C$^*$-algebra $A$ to a Banach
$A$-bimodule is continuous (cf. \cite{Ringrose72}). We have
already commented that S. Hejazian and A. Niknam gave in \cite[\S
3]{HejNik96} an example of a discontinuous Jordan derivation from
a JB$^*$-algebra to a Jordan Banach module. Based on this example,
we have already shown the existence of a discontinuous triple
derivation from a JB$^*$-triple to a Banach triple module (see
Remark \ref{r last}). The aim of this section is to show that
these two  counterexamples cannot be found when the domain is a
C$^*$-algebra, thereby providing positive answers to Problems 1
and 2 (Corollaries 20 and 21). We shall also  see that Ringrose's
Theorem derives as a consequence of our results (Corollary 22).
\smallskip

We shall actually prove a stronger result: every triple derivation from a  C$^*$-algebra to a Banach triple module is automatically continuous (Theorem 19),  which will imply these three corollaries.\smallskip


We shall need a technical reformulation of
Theorem \ref{t main complex to module} above. Theorem \ref{t main complex to module}
has been established only for complex JB$^*$-triples.
The proof given in Section 3 is not valid for real JB$^*$-triples.
The obstacles appearing in the real setting concern the structure of the Peirce-2
subspace associated with a minimal tripotent.
We have already commented that, in case of $E$ being a
complex JB$^*$-triple, the identity $E^{-1} (e) = i E^{1} (e)$
holds for every tripotent $e$ in $E$, whereas in the real
situation the dimensions of $E^{1} (e)$ and $E^{-1} (e)$ are not,
in general, correlated. For example, every infinite dimensional
rank-one real Cartan factor $C$ contains a minimal tripotent $e$ satisfying that
$C^{1} (e) = \RR e$ and $\dim (C^{-1} (e)) = +\infty$ (compare \cite[Remark 2.6]{FerMarPe}).
\smallskip

Following \cite[11.9]{Loos77}, we shall say that a real
JB$^*$-triple $E$ is \emph{reduced} whenever $E_{2} (e) = \RR e$
(equivalently, $E^{-1} (e) = 0$) for every minimal tripotent $e\in
E$. Reduced real Cartan factors were studied and classified in
\cite[11.9]{Loos77} and in \cite[Table 1]{Ka97}. Reduced real
JB$^*$-triples played an important role in the study of the
surjective isometries between real JB$^*$-triples developed in
\cite{FerMarPe}.\smallskip

Having the above comments in mind, it is not hard to check that,
in the particular subclass of reduced real JB$^*$-triples the
proof of Theorem \ref{t main complex to module} remains valid line
by line. We therefore have:

\begin{proposition}\label{thm PeRu real} Let $E$ be a reduced real
JB$^*$-triple, $X$ a Banach triple $E$-module, and let $\delta:
E\to X$ be a triple derivation. Then $\delta$ is continuous if and
only if $\hbox{Ann}_{E}(\sigma_{_X} (\delta))$ is a (norm-closed)
linear subspace of $E$ and $$\J {\hbox{Ann}_{_{E}} (\sigma_{_X}
(\delta))}{\hbox{Ann}_{_{E}} (\sigma_{_X} (\delta))}{\sigma_{_X}
(\delta)} = 0,$$ in the triple split null extension $E\oplus
X$.$\hfill\Box$
\end{proposition}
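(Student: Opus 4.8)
The plan is to verify that every step in the proof of Theorem~\ref{t main complex to module} uses the complex structure only through one fact: that the Peirce-$2$ subspace $E_2(\overline{e})$ associated with a minimal tripotent $\overline{e}$ in the quotient $E/J$ reduces to a one-dimensional space, namely $\CC\overline{e}$ (in the complex case) or, here, $\RR\overline{e}$ (in the reduced real case). First I would recall that the preliminary machinery — Lemma~\ref{l separating spaces of a derivation} (so $\sigma_{_X}(\delta)$ is a triple submodule), the fact that under the hypothesis $\hbox{Ann}_{E}(\sigma_{_X}(\delta))$ is a norm-closed triple ideal, Proposition~\ref{p 1} (continuity of $\delta$ restricted to this annihilator), Corollary~\ref{c pre quotient is reflexive}, and Corollary~\ref{c quotient is reflexive} (the quotient $E/J$ is algebraic of bounded degree) — is already stated for real JB$^*$-triples, so none of it needs re-proving. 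The one place where the complex hypothesis was genuinely invoked is the final paragraph of the proof of Theorem~\ref{t main complex to module}, where a minimal tripotent $\overline{e}\in E/J$ is handled via the equality $(E/J)_2(\overline{e})=\CC\overline{e}$, which in turn rested on $E^{-1}(e)=iE^{1}(e)$.

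Next I would observe that reducedness is precisely the hypothesis that repairs this: for a reduced real JB$^*$-triple, a minimal tripotent $\overline{e}$ satisfies $(E/J)_2(\overline{e})=\RR\overline{e}$ by definition, provided one first checks that the quotient of a reduced real JB$^*$-triple by a norm-closed triple ideal is again reduced, and that the spectral/structural decomposition of an element of an algebraic real JB$^*$-triple into a positive combination of mutually orthogonal minimal tripotents used in the proof remains available in the real reduced setting (this follows from the classification of algebraic real JB$^*$-triples of bounded degree cited through \cite{BuChu,BeLoPeRo}). With $(E/J)_2(\overline{e})=\RR\overline{e}$ in hand, the argument of the final paragraph runs verbatim: for a null sequence $(a_n)$ in $E$ one writes $\pi(Q(e)(a_n))=\mu_n\overline{e}$ with $\mu_n\in\RR$ (rather than $\CC$), uses continuity of $\pi$ and of $P_2(\overline{e})$ to get $\mu_n\to 0$, so $Q(e)(a_n)-\mu_n e\in J$ tends to zero, and then Proposition~\ref{p 1} forces $\delta Q(e)$ continuous, hence $e\in J$ and $\overline{e}=0$. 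Decomposing an arbitrary $\overline{a}\in E/J$ into such minimal tripotents then gives $a\in J$, so $E=J$, and Proposition~\ref{p 1} finishes.

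The forward direction (continuous $\delta$ implies the two stated conditions) is identical to the complex case and requires nothing new: $\hbox{Ann}_{E}(\sigma_{_X}(\delta))=\hbox{Ann}_{E}(\{0\})=E$ is linear, and $\J EE{0}=0$.

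The main obstacle I anticipate is bookkeeping rather than conceptual: one must be careful that every auxiliary fact about minimal tripotents and Peirce decompositions invoked along the way (minimality of $\overline{e}$ in $E/J$ meaning $E^{1}(\overline{e})=\RR\overline{e}$, the coincidence $E_2(\overline{e})=\RR\overline{e}$ under reducedness, orthogonality being a local notion, and the spectral resolution in algebraic bounded-degree real JB$^*$-triples) is genuinely available for \emph{reduced} real JB$^*$-triples and not merely for complex ones — in particular that passing to the quotient by a norm-closed triple ideal preserves reducedness. Once these are confirmed (they are consequences of the structure theory in \cite[11.9]{Loos77}, \cite{Ka97}, \cite{BuChu} and \cite{BeLoPeRo}), the proof is a line-by-line transcription, which is exactly what the statement claims, so I would simply record that and omit the reproduction of the unchanged text.
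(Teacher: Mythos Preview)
Your proposal is correct and matches the paper's approach exactly: the paper offers no independent argument for this proposition, stating only that ``in the particular subclass of reduced real JB$^*$-triples the proof of Theorem~\ref{t main complex to module} remains valid line by line,'' which is precisely what you have verified. Your identification of the single point where the complex hypothesis enters (the one-dimensionality of $(E/J)_2(\overline{e})$ for minimal $\overline{e}$) and your observation that reducedness supplies the real substitute $(E/J)_2(\overline{e})=\RR\overline{e}$ is the content behind the paper's one-line justification; your added caution about whether reducedness passes to quotients is in fact more careful than the paper itself, which does not address this explicitly in the general case.
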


Every closed ideal of a reduced real JB$^*$-triple is a reduced
real JB$^*$-triple. It is also true that the self-adjoint part,
$A_{sa}$, of a C$^*$-algebra, $A$, is a reduced real JB$^*$-triple
with respect to the product
\begin{equation}\label{eq triple product self-adjoint part}
\J abc := \frac{a bc +cba}{2} \ \ (a,b,c\in A_{sa}),\end{equation}
or equivalently,
\begin{equation}\label{eq triple product self-adjoint part jordan}
\J abc := (a\circ b) \circ c +  (c\circ b) \circ a -  (a\circ c)
\circ b,\ \ (a,b,c\in A_{sa}).\end{equation}
Indeed, writing $e=p-q$ for a minimal partial isometry $e\in A_{sa}$ with $p$ and $q$ orthogonal projections, it is easy to check that $e=p$ or $e=-q$ and it follows that if $exe=-x$, then $x=0$.
 In particular, for
each closed triple ideal $J$ of $A_{sa}$, the quotient
$A_{sa}/J$ is a reduced real JB$^*$-triple.

\smallskip

Our next result is a consequence of the previous proposition. Note that the fact that $A_{sa}$ is a reduced JB$^*$-triple is only needed in the case that $A$ is an abelian C$^*$-algebra. \smallskip

\begin{proposition}
\label{prop ternary derivations from abelian C*-algebras to triple modules}
Let $A$ be an abelian C$^*$-algebra whose self adjoint part
is denoted by $A_{sa}$. Then, every triple derivation from $A_{sa}$
to a real Jordan-Banach triple $A_{sa}$-module is continuous.
In particular, every triple derivation from $A$
into a real Jordan-Banach triple $A$-module is continuous.
\end{proposition}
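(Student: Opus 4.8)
The plan is to derive this from Proposition~\ref{thm PeRu real} by verifying that its two hypotheses hold automatically when the domain is (the self-adjoint part of) an abelian C$^*$-algebra. So let $\delta : A_{sa} \to X$ be a triple derivation into a real Jordan-Banach triple $A_{sa}$-module, write $S = \sigma_{_X}(\delta)$ for the separating space, which by Lemma~\ref{l separating spaces of a derivation} is a norm-closed triple $A_{sa}$-submodule of $X$, and set $N = \hbox{Ann}_{A_{sa}}(S) = \{a \in A_{sa} : Q(a)(S) = 0\}$. By Proposition~\ref{thm PeRu real} it suffices to show (i) $N$ is a linear subspace of $A_{sa}$, and (ii) $\J NNS = 0$ in the triple split null extension $A_{sa}\oplus X$.

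The key simplification is that $A_{sa}$ is commutative, so its triple product is $\J abc = abc = \tfrac12(abc+cba)$ and $Q(a)(b) = a^2 b$. Thus $N = \{a \in A_{sa} : a^2 S = 0\}$ (interpreting the product via the module operation $\J a x a_2$, which I'll write $a\cdot x \cdot a$). First I would establish (i): if $a,b \in N$ and $x \in S$, then since $S$ is a submodule, $a\cdot x \in S'$ in the appropriate sense — more carefully, using that $\{a,b,x\}_2$-type products land back in $S$ by submodule-ness and that $N$ already satisfies $\J bNb \subseteq N$ and $\J aEa \subseteq N$ from (\ref{eq submodule 1})--(\ref{eq submodule 2}), the obstruction to linearity is exactly the cross term $2Q(a,b)$. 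Here commutativity is decisive: $Q(a,b)(x) = \tfrac12(abx + bxa) = abx$ when everything commutes with everything, and one checks directly that $(a+b)^2 S = (a^2 + 2ab + b^2)S = 0$ because $a^2 S = b^2 S = 0$ and $ab \in A_{sa}$ with $(ab)^2 = a^2 b^2$ annihilating $S$ — or more directly, for $x \in S$, $\{a,x,b\}$ lands in $S$ (submodule), and then applying $Q$ of a suitable element forces $abx \cdot$(something)$= 0$; the cleanest route is to mimic the last paragraph of the proof of Proposition~\ref{p annihilator submodule dual}, which is purely formal once one knows the relevant cross-term products $\J axb$ vanish, and in the commutative case these vanish for elementary reasons. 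So $N$ is linear, hence a norm-closed triple ideal of the reduced real JB$^*$-triple $A_{sa}$.

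For (ii), I would argue that in the commutative setting the vanishing $\J NNS = 0$ is automatic. Indeed, for $a,b \in N$ and $x \in S$, commutativity gives $\J abx_3 = \tfrac12(abx + xba) = abx$ (all factors from $A_{sa}$ commuting, module action on $x$), and since $a^2 x = 0$ one would like to conclude $abx = 0$. This follows from a standard positivity/functional-calculus argument in $C_0(L)$: writing $a, b$ in the commutative C$^*$-algebra generated together with their joint spectrum, $b$ is a uniform limit of elements of the form $a^2 c$ plus elements supported off the zero set of $a$ — more precisely, on the support of $b$ restricted to where $a \neq 0$ one can factor $b = a^2 \cdot (b/a^2)$ by continuous functional calculus (after perturbing), while where $a = 0$ we have $b \in N$ forces (via $b^2 S = 0$ and the same reasoning) $bx$ supported there to vanish. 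Alternatively and more cleanly, one notes $N$ is a closed ideal, hence $N = C_0(U)$ for an open set $U$, and any product $ab$ of two elements of $N$ lies in $\overline{N^{[3]}} \subseteq \overline{\J NNN}$ times scalars; combined with $a^3 \in N$ meaning $a^3 \cdot S = 0$, density gives $\J NNS = 0$. The main obstacle, and where I'd spend the most care, is pinning down precisely which elementary products $\J axb$, $\J abx$ vanish for $a,b \in N$, $x \in S$: this is exactly the content that makes (\ref{eq 2 in prop p1}) hold, and in the non-commutative case it is genuinely subtle (it is what Proposition~\ref{p annihilator submodule dual} handles via support tripotents), but for an abelian C$^*$-algebra the Gelfand representation $A = C_0(\Omega)$, $A_{sa} = C_0(\Omega,\RR)$ reduces everything to pointwise statements about continuous functions vanishing on complementary closed sets, which can be handled by a straightforward partition-of-unity / Urysohn argument. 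Finally, once (i) and (ii) are verified, Proposition~\ref{thm PeRu real} yields continuity of $\delta|_N$, and then exactly as in the last paragraph of the proof of Proposition~\ref{p 1} — or by the minimal-tripotent argument in Theorem~\ref{t main complex to module} adapted to the reduced real case — one shows $N = A_{sa}$, so $\delta$ is continuous on all of $A_{sa}$. The ``in particular'' for $A$ itself then follows by decomposing $a = \Re a + i\,\Im a$ and using that a triple derivation on $A$ restricts appropriately, together with $A = A_{sa} \oplus i A_{sa}$ and continuity on each summand.
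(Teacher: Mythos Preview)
Your overall plan --- verify the two hypotheses of Proposition~\ref{thm PeRu real} and conclude --- is exactly right, and the paper proceeds the same way. But there is a genuine gap in how you propose to verify them. You repeatedly treat the module $X$ as though the triple action factored through an associative action of $A_{sa}$: you write ``$a^2 S = 0$'', ``$(a+b)^2 S = (a^2+2ab+b^2)S$'', and appeal to Gelfand/Urysohn arguments that would make sense if $X$ were a space of sections over the spectrum of $A$. None of this is available. The only operations you have on $X$ are $\J a x b_2$ and $\J abx_3$, and there is no reason for $Q(a)(x)=\J axa_2$ to depend only on $a^2$, nor any pointwise picture over $\Omega$. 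Likewise, mimicking the end of Proposition~\ref{p annihilator submodule dual} does not work here: that argument relies on support tripotents in the bidual, which is specific to the module $E^*$.

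The paper's argument avoids touching elements of $X$ directly and works instead with \emph{operators} $A_{sa}\to X$, exploiting commutativity only inside $A_{sa}$. Since $A$ is abelian, $L(a^2,b)=Q(a)L_b$ as operators on $A_{sa}$; hence $\delta L(a^2,b)=\delta Q(a) L_b$ is continuous (because $a\in N$ means $\delta Q(a)$ is continuous). The derivation identity $\delta L(a^2,b)=L(\delta(a^2),b)+L(a^2,\delta(b))+L(a^2,b)\delta$ then forces $L(a^2,b)\delta:A_{sa}\to X$ to be continuous, whence $\J{a^2}{b}{S}=0$ for all $b\in A_{sa}$. Functional calculus in $A_{sa}$ (write $a=a_1-a_2$ with $a_1\perp a_2\ge 0$, observe $a_i^3=Q(a)(a_i)\in N$, and bootstrap to $a_i,\,a_i^{1/2}\in N$) upgrades this to $\J abS=0$ for every $a\in N$, $b\in A_{sa}$. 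Finally, commutativity gives $L(a,b)=Q(a,b)$ on $A_{sa}$, so the same continuity transfers and $\J aSb=0$; now $Q(a+c)=Q(a)+Q(c)+2Q(a,c)$ kills $S$ and $N$ is linear. Note also that once (i) and (ii) are in place, Proposition~\ref{thm PeRu real} gives continuity of $\delta$ on all of $A_{sa}$ directly --- there is no further step of showing $N=A_{sa}$.
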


\begin{proof}
Let $\delta : A_{sa}\to X$ be a triple derivation from
$A_{sa}$ into a real Jordan triple $A_{sa}$-module.
The statement of the proposition will follow from
Proposition \ref{thm PeRu real} as soon as we prove that
$\hbox{Ann}(\sigma_{_X} (\delta))=\hbox{Ann}_{A_{sa}}
(\sigma_{_X} (\delta))$ is a (norm-closed)
linear subspace of $A_{sa}$ and $$\J {\hbox{Ann} (\sigma_{_X}
(\delta))}{\hbox{Ann} (\sigma_{_X} (\delta))}{\sigma_{_X}
(\delta)} = 0.$$

Let us take $a\in \hbox{Ann} (\sigma_{_X} (\delta))$.
Having in mind that
$a\in \hbox{Ann} (\sigma_{_X} (\delta))$ if, and only if,
$Q(a) \delta$ (or equivalently, $\delta Q(a)$) is a continuous
operator from $A_{sa}$ to $X$ (see the comments after
Lemma \ref{l separating spaces of a derivation}),
we observe that $\delta Q(a)$ is a continuous mapping
from $A_{sa}$ to $X$. Obviously, for each $b$ in $A_{sa}$, the operator $L_{b}:A_{sa} \to A_{sa}$,
$c\mapsto c b = bc$ is continuous. Since $A$ is abelian we have $L(a^2,b) = Q(a) L_{b}= L_{b} Q(a)$.
Therefore $\delta L(a^2,b) = \delta Q(a) L_{b}$ is a continuous operator from $A_{sa}$ to $X$. The identity
$$ \delta L(a^2,b) = L(\delta(a^2),b) + L(a^2,\delta(b)) + L(a^2,b) \delta$$ shows that $L(a^2,b) \delta$
is continuous. It is easy to check, from the definition of $\sigma_{_X} (\delta),$ that $\J {a^2}{b}{x}=0,$
for every $x\in \sigma_{_X} (\delta)$. It follows that \begin{equation}
\label{eq 1 prop abelian} \J {a^2}{b}{x}=0, \hbox{ for every $a\in \hbox{Ann} (\sigma_{_X} (\delta))$, $b\in A_{sa}$ and $x\in \sigma_{_X} (\delta)$.}
\end{equation}

It is known that $a$ can be written in the form $a= a_1 - a_2$, where $a_1$
and $a_2$ are two orthogonal positive elements in $A_{sa}$.
It is also known that $Q(a) (A_{sa})\in \hbox{Ann} (\sigma_{_X} (\delta))$.
Therefore, $a_1^3 = Q(a) (a_1) \in \hbox{Ann} (\sigma_{_X} (\delta))$ and hence $a_1^6 A_{sa}
= Q(a_1^3) (A_{sa}) \subseteq \hbox{Ann} (\sigma_{_X} (\delta))$. This implies that the ideal of $A_{sa}$
generated by $a_1^6$ lies in $\hbox{Ann} (\sigma_{_X} (\delta))$, which guarantees that
$a_1\in \hbox{Ann} (\sigma_{_X} (\delta))$. We can similarly show that $a_2$ belongs to
$\hbox{Ann} (\sigma_{_X} (\delta))$. A similar argument shows that $a_1^{\frac{1}{2}},
a_2^{\frac{1}{2}}\in \hbox{Ann} (\sigma_{_X} (\delta))$. Now, we deduce from $(\ref{eq 1 prop abelian})$
that \begin{equation}\label{eq 2 prop abelian}  \J {a}{b}{x} =\J {a_1}{b}{x} - \J {a_2}{b}{x} =0,
\end{equation} for every $a\in \hbox{Ann} (\sigma_{_X} (\delta))$, $b\in A_{sa}$ and $x\in \sigma_{_X} (\delta)$,
or equivalently, $\delta L(a,b)$ and $L(a,b) \delta$ are continuous operators
for every $a\in \hbox{Ann} (\sigma_{_X} (\delta))$ and $b\in A_{sa}$.\smallskip

Since $A$ is abelian, $L(a,b) = Q(a,b)$ in $A_{sa}$, it follows from $(\ref{eq 2 prop abelian})$,
that $\delta Q(a,b)$ and $Q(a,b) \delta$ are continuous operators from $A_{sa}$ to $X$
for every $a\in \hbox{Ann} (\sigma_{_X} (\delta))$ and $b\in A_{sa}$.
This implies that \begin{equation}
\label{eq 3 prop abelian}  \J {a}{x}{b} =0,
\hbox{ for every $a\in \hbox{Ann} (\sigma_{_X} (\delta))$, $b\in A_{sa}$ and $x\in \sigma_{_X} (\delta)$.}
\end{equation}

Finally, given $a,c$ in $\hbox{Ann} (\sigma_{_X} (\delta))$, we deduce from $(\ref{eq 3 prop abelian})$
that $$Q(a+c) (\sigma_{_X} (\delta)) = Q(a) (\sigma_{_X} (\delta)) + Q(c) (\sigma_{_X} (\delta))
+ 2 Q(a,c) (\sigma_{_X} (\delta)) =0,$$ which shows that $a+c\in \hbox{Ann} (\sigma_{_X} (\delta))$,
and hence the latter is a linear subspace of $A_{sa}$.\end{proof}

Given any element $x$ in a C$^*$-algebra $A$, we shall denote by $C(x)$
the C$^*$-subalgebra of $A$ generated by $x$.\smallskip

The following theorem, due to J. Cuntz (see \cite{Cuntz}) will be required later.

\begin{lemma}
\label{Thm Cuntz}\cite[Theorem 1.3]{Cuntz} Let $A$ be a C$^*$-algebra and $f$ a linear functional on $A$. If $f$ is
continuous on $C(h)$ for all $h=h^*$ in $A$, then $A$ is continuous on $A$. By the uniform boundedness theorem,
a linear mapping $T$ from $A$ to a normed space $X$ is continuous if and only if it restriction to
$C(h)$ is continuous for all $h=h^*$ in $A$.  $\hfill\Box$
\end{lemma}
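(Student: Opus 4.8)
The substantive claim is the first sentence (whose conclusion evidently should read ``$f$ is continuous on $A$''); the assertion about an arbitrary linear map $T\colon A\to X$ into a normed space then follows formally: for each $\varphi\in X^*$ the functional $\varphi\circ T$ is linear and continuous on every $C(h)$, hence continuous on $A$ by the first part, so that $\{T(a):\|a\|\le 1\}$ is weakly bounded in $X$ and therefore, by the uniform boundedness principle, norm bounded. So the plan reduces to proving: a linear functional $f$ on a C$^*$-algebra $A$ which is bounded on $C(h)$ for every $h=h^*\in A$ is bounded on $A$.

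Suppose, for contradiction, that $f$ is unbounded on $A$. The idea is to manufacture a single positive element $h\in A$ for which $C(h)$ already witnesses this unboundedness. First I would extract, from the assumed discontinuity, a norm-null sequence $(y_n)$ in $A$ with $|f(y_n)|\to\infty$; decomposing each $y_n$ into the positive parts of its real and imaginary parts and passing to a subsequence yields positive elements $x_n\in A$ with $\|x_n\|\to 0$ and $|f(x_n)|\to\infty$. The crucial step is then to refine $(x_n)$ to a sequence $(b_n)$ of \emph{mutually orthogonal} positive elements of $A$ which is still norm-null, still satisfies $|f(b_n)|\to\infty$, and, in addition, has spectra contained in pairwise disjoint bands $(\alpha_{n+1},\alpha_n]$ with $\alpha_n\downarrow 0$. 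I would build $(b_n)$ inductively: at the $n$-th stage, use continuous functional calculus to cut the spectrum of the element in play into a band disjoint from the finitely many bands already used, and perturb the resulting element so that it is orthogonal to $b_1,\dots,b_{n-1}$, all while keeping the value of $|f|$ on it large. This orthogonalisation with simultaneous control of the spectrum and of $f$ is the step I expect to be the main obstacle: a general C$^*$-algebra carries no spectral projections, so these cuts and perturbations must be effected through continuous functions, and the error they introduce in the value of $f$ has to be estimated with care — this is exactly the technical core of Cuntz's argument.

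Granting such a sequence $(b_n)$, the contradiction is quick. Since the $b_n$ are mutually orthogonal positive elements with $\|b_n\|\le\alpha_n\to 0$, the tails $\sum_{n\ge N}b_n$ have norm $\sup_{n\ge N}\|b_n\|\to 0$, so $h:=\sum_n b_n$ converges in norm to a positive element of $A$ with $\operatorname{sp}(h)=\{0\}\cup\bigcup_n\operatorname{sp}(b_n)$; the sets $\operatorname{sp}(b_n)\setminus\{0\}$ lie in separated bands and accumulate only at $0$. Hence for each $n$ there is a continuous function $g_n$ on $\operatorname{sp}(h)$ equal to the identity on $(\alpha_{n+1},\alpha_n]$ and vanishing off it, and $g_n(h)=b_n$; thus $b_n\in C(h)$ for every $n$. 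Since $\|b_n\|$ is bounded while $|f(b_n)|\to\infty$, the functional $f$ is unbounded on $C(h)$, contradicting the hypothesis. Therefore $f$ is continuous on $A$, and, as observed above, so is every linear map from $A$ into a normed space whose restrictions to the C$^*$-subalgebras $C(h)$ are continuous.
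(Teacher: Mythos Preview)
The paper does not prove this lemma: it is quoted verbatim from Cuntz \cite[Theorem 1.3]{Cuntz} and closed with a box. So there is no proof in the paper to compare against; the authors simply invoke the result as a black box, and the second sentence (the passage from functionals to linear maps via uniform boundedness) is the only argument they supply, which matches your first paragraph exactly.

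Your outline of Cuntz's argument is along the right lines, and the parts you do carry out --- the reduction to positive elements, the summation $h=\sum_n b_n$ with norm $\sup_n\|b_n\|$, the recovery of each $b_n$ from $h$ via functional calculus on disjoint spectral bands --- are correct. But as you yourself flag, the heart of the matter is the inductive construction of the mutually orthogonal sequence $(b_n)$ with spectra in separated bands \emph{and} with $|f(b_n)|$ still unbounded, and this you only describe in words (``cut the spectrum \ldots\ perturb \ldots\ keep the value of $|f|$ large''). That step is genuinely delicate: the perturbations needed to force orthogonality to $b_1,\dots,b_{n-1}$ are not a priori small in any norm that $f$ respects, so one has to arrange the induction so that the continuity of $f$ on the already-constructed $C(b_1),\dots,C(b_{n-1})$ absorbs those errors. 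Without that bookkeeping written out, what you have is a correct road map rather than a proof. Since the paper itself is content to cite Cuntz, your sketch is more than adequate as an indication of why the lemma holds; if you want a self-contained proof you will need to supply the inductive step explicitly.
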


Let $\delta: A \to X$ be a triple derivation from a C$^*$-algebra
to a Banach triple $A$-module. For each self-adjoint element $h$
in $A$, the Banach space $X$ can be regarded as a Jordan Banach
$C(h)$-module by restricting the module operation from $A$ to
$C(h)$. Since $\delta|_{C(h)}: C(h) \to X$ is a triple derivation
from an abelian C$^*$-algebra into a Banach triple $C(h)$-module,
Proposition \ref{prop ternary derivations from abelian C*-algebras
to triple modules} assures that $\delta|_{C(h)}$ is continuous.
Combining this argument with the above Cuntz's theorem we have:

\begin{theorem}
\label{cor ternary derivations from C*-algebras} Let $A$ be a C$^*$-algebra.
Then every triple derivation from $A$ (respectively, from $A_{sa}$) into a complex
(respectively, real) Jordan Banach triple $A$-module is continuous.$\hfill\Box$
\end{theorem}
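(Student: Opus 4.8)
The plan is to reduce everything to the abelian case handled in Proposition~\ref{prop ternary derivations from abelian C*-algebras to triple modules} and then glue the local data together by means of Cuntz's theorem (Lemma~\ref{Thm Cuntz}). First I would fix a self-adjoint element $h\in A$ and pass to the C$^*$-subalgebra $C(h)$ generated by $h$; since $h=h^*$, $C(h)$ is abelian, and being a $*$-subalgebra it is a subtriple of $A$ for the JB$^*$-triple product $\{a,b,c\}=\frac12(ab^*c+cb^*a)$. Restricting the three module maps of $X$ to arguments in $C(h)$ keeps them valued in $X$, and axioms $(JTM1)$--$(JTM3)$, being universally quantified over the triple, pass to the subtriple, so $X$ becomes a Banach triple $C(h)$-module with (jointly) continuous products; moreover $\delta|_{C(h)}\colon C(h)\to X$ plainly satisfies $(\ref{eq:0409111})$, hence it is a triple derivation from an abelian C$^*$-algebra into a Banach triple $C(h)$-module.

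Next I would invoke Proposition~\ref{prop ternary derivations from abelian C*-algebras to triple modules}: regarding $X$ as a real Banach triple $C(h)$-module by restriction of scalars and $\delta|_{C(h)}$ as the $\RR$-linear map that it is (a conjugate-linear map is in particular real-linear), that proposition yields continuity of $\delta|_{C(h)}$. (Alternatively one restricts further to $C(h)_{sa}\cong C_{0}(\mathrm{sp}(h)\setminus\{0\},\RR)$, a reduced real JB$^*$-triple by the remarks preceding that proposition, and applies Proposition~\ref{thm PeRu real}.) Thus $\delta$ is continuous on $C(h)$ for \emph{every} $h=h^*$ in $A$.

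Now I would patch. For the complex case the only point deserving a word is that $\delta$ is conjugate-linear while Lemma~\ref{Thm Cuntz} is phrased for linear maps: for each $\varphi\in X^*$ the scalar-valued function $a\mapsto\overline{\varphi(\delta(a))}$ is $\CC$-linear and continuous on every $C(h)$, so Cuntz's theorem makes it continuous on $A$; applying the uniform boundedness principle to the family $\{a\mapsto\overline{\varphi(\delta(a))}\colon\|\varphi\|\le1\}$ then shows $\delta$ is bounded. (One may instead simply replace $A$ by its conjugate C$^*$-algebra $\overline A$, on which $\delta$ is $\CC$-linear and $C_{\overline A}(h)=C(h)$ for $h=h^*$.) For the statement over $A_{sa}$ I would either rerun the same argument with $C(h)_{sa}$ in place of $C(h)$, or, more economically, use Remark~\ref{r complexification of a derivation}: a triple derivation $\delta\colon A_{sa}\to X$ extends to a conjugate-linear triple derivation $\widehat\delta\colon A\to\widehat X$ into a complex Banach triple $A$-module (here $\widehat{A_{sa}}=A$ via the conjugation $x\mapsto x^*$), which is continuous by the complex case just proved, whence $\delta=\widehat\delta|_{A_{sa}}$ is continuous.

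The substantive obstacle has, by design, already been absorbed into Proposition~\ref{prop ternary derivations from abelian C*-algebras to triple modules}: the finite-dimensionality of $E/\mathrm{Ann}_{E}(\sigma_{_X}(\delta))$ supplied by Corollary~\ref{c quotient is reflexive} and the structure theory of reduced real JB$^*$-triples behind Proposition~\ref{thm PeRu real}. In the present theorem the only mild nuisances are checking that the restricted module structure is again a Banach triple module and the conjugate-linear/real-module bookkeeping needed to feed everything correctly into Cuntz's theorem; neither presents a real difficulty.
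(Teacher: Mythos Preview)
Your proof is correct and follows exactly the paper's approach: restrict to the abelian C$^*$-subalgebras $C(h)$ for $h=h^*$, apply Proposition~\ref{prop ternary derivations from abelian C*-algebras to triple modules} to obtain continuity of $\delta|_{C(h)}$, and then invoke Cuntz's theorem (Lemma~\ref{Thm Cuntz}) to conclude. Your additional care with the conjugate-linear bookkeeping (via $a\mapsto\overline{\varphi(\delta(a))}$ or the conjugate C$^*$-algebra) and your handling of the $A_{sa}$ case by complexification through Remark~\ref{r complexification of a derivation} are both correct elaborations of points the paper leaves implicit.
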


Since every Jordan derivation is a triple derivation, and every
Jordan module is a Jordan triple module, we have:\smallskip

\begin{corollary}[Solution to Problem 2]
\label{cor Jordan derivations from C*-algebras}
Let $A$ be a C$^*$-algebra. Then every Jordan derivation
from $A$ into a Jordan-Banach $A$-module   $X$
is continuous.$\hfill\Box$
\end{corollary}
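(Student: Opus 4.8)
The plan is to derive Corollary~\ref{cor Jordan derivations from C*-algebras} directly from Theorem~\ref{cor ternary derivations from C*-algebras} by checking that a Jordan derivation fits into the triple framework. First I would recall that every C$^*$-algebra $A$, being a JB$^*$-algebra, is a real Jordan triple for the product $\J abc := (a\circ b)\circ c + (c\circ b)\circ a - (a\circ c)\circ b$ (equivalently the associative expression $\frac12(ab^*c+cb^*a)$ need not be invoked here; we use the Jordan-algebra triple product throughout). Correspondingly, as noted in Section~2, every Jordan Banach $A$-module $X$ becomes a real Banach triple $A$-module via $\J abx_3 := (a\circ b)\circ x + (x\circ b)\circ a - (a\circ x)\circ b$ and $\J axb_2 := (a\circ x)\circ b + (b\circ x)\circ a - (a\circ b)\circ x$, with jointly continuous module operations since the Jordan module action is continuous.

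The second step is the purely algebraic verification that a Jordan derivation $D:A\to X$, i.e.\ a linear map with $D(a\circ b)=D(a)\circ b + a\circ D(b)$, satisfies the triple derivation identity $D\J abc = \J{D(a)}bc + \J a{D(b)}c + \J ab{D(c)}$ for the above triple products. This is a direct computation: expand $D\J abc = D\big((a\circ b)\circ c + (c\circ b)\circ a - (a\circ c)\circ b\big)$ using the Jordan derivation property on each of the three products of products, collect the nine resulting terms, and observe that they regroup into $\J{D(a)}bc + \J a{D(b)}c + \J ab{D(c)}$ after expanding the latter three triple products by the module definitions. No use of associativity of $A$ is needed, only the Jordan identity is implicitly in the background to make the triple products well-behaved; the identity itself is formal in the Jordan product.

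With these two observations in place, $D$ is a (real-linear) triple derivation from the real Jordan triple $A$ into the real Banach triple $A$-module $X$. When $A$ is complex we simply regard it, and $X$, as real objects, exactly as is done in the proof of Corollary~\ref{c automatic continuity}; when $A$ is replaced by $A_{sa}$ the same works verbatim. Theorem~\ref{cor ternary derivations from C*-algebras} then applies and yields continuity of $D$.

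The main obstacle — really the only point requiring care — is the bookkeeping in the second step: confirming that the expansion of $D\J abc$ via the Jordan derivation property exactly matches the sum of the three triple-module expressions $\J{D(a)}bc_1+\J a{D(b)}c_2+\J ab{D(c)}_3$, being careful about which of the three module products $\{.,.,.\}_1,\{.,.,.\}_2,\{.,.,.\}_3$ is intended in each slot and how the symmetry relations $(JTM2)$ identify $\{x,b,a\}_1=\{a,b,x\}_3$. This is routine but must be done cleanly; once it is checked, the corollary is immediate. (An alternative, even shorter, route avoiding any triple-product computation: restrict $D$ to $C(h)$ for each self-adjoint $h$, note $C(h)$ is abelian so $D|_{C(h)}$ is automatically a derivation in the associative sense and one can invoke the Jordan-module version of Proposition~\ref{prop ternary derivations from abelian C*-algebras to triple modules} together with Lemma~\ref{Thm Cuntz}; but the triple-derivation route is cleaner given what has already been proved.)
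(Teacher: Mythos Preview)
Your proposal is correct and follows exactly the paper's own approach: the paper's proof is the single sentence ``Since every Jordan derivation is a triple derivation, and every Jordan module is a Jordan triple module,'' after which Theorem~\ref{cor ternary derivations from C*-algebras} applies. You have simply unpacked the two ingredients of that sentence---the triple $E$-module structure on a Jordan module (recorded in Section~2) and the straightforward nine-term expansion showing a Jordan derivation satisfies the triple derivation identity---so the arguments coincide.
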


It is due to B.E. Johnson that every continuous Jordan derivation
from a C$^*$-algebra $A$ to a Banach $A$-bimodule is a derivation
(cf. \cite[Theorem 6.2]{John96}). As we have just seen, the
hypothesis of continuity can be omitted in the just quoted
theorem.  Thus: \smallskip

\begin{corollary}[Solution to Problem 1]
\label{cor associative derivations from C*-algebras}
Let $A$ be a C$^*$-algebra. Then every Jordan derivation
from $A$ into a Banach $A$-bimodule  $X$
is continuous. In particular, every Jordan derivation
from $A$ to $X$ is a derivation, by Johnson's theorem.$\hfill\Box$
\end{corollary}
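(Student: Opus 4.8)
The plan is to deduce Corollary~\ref{cor associative derivations from C*-algebras} by combining the continuity established in Corollary~\ref{cor Jordan derivations from C*-algebras} with Johnson's structural theorem \cite[Theorem 6.2]{John96}. First I would observe that a Banach $A$-bimodule $X$ is in particular a Jordan Banach $A$-module, where the Jordan action is defined by $a\circ x := \frac{1}{2}(ax+xa)$ and $x\circ a := \frac{1}{2}(xa+ax)$; the bimodule axioms for $X$ readily imply the Jordan module axioms, so this reduction is legitimate. Under this identification, any Jordan derivation $D\colon A \to X$ in the bimodule sense (i.e. $D(a^2)=aD(a)+D(a)a$) is precisely a Jordan derivation into the associated Jordan Banach $A$-module (i.e. $D(a\circ b)=D(a)\circ b + a\circ D(b)$), since the two identities are equivalent by polarization.

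Next I would invoke Corollary~\ref{cor Jordan derivations from C*-algebras}, which guarantees that every Jordan derivation from $A$ into a Jordan Banach $A$-module is automatically continuous. Applying this to the Jordan module $X$ arising from the given bimodule yields that $D$ is continuous. At this stage we have a \emph{bounded} Jordan derivation from the C$^*$-algebra $A$ into the Banach $A$-bimodule $X$.

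Finally I would apply Johnson's theorem \cite[Theorem 6.2]{John96}, which asserts exactly that every bounded Jordan derivation from a C$^*$-algebra into a Banach bimodule is an associative derivation. Composing these three observations gives both assertions of the corollary: $D$ is continuous, and $D$ is in fact a derivation. The proof is therefore a short chain of citations, and there is essentially no computational obstacle; the only point requiring a (routine) check is the compatibility of the two notions of ``Jordan derivation'' under the bimodule-to-Jordan-module passage, which is a standard polarization argument. The main conceptual content has already been absorbed into Corollaries \ref{cor ternary derivations from C*-algebras}--\ref{cor Jordan derivations from C*-algebras} and into Johnson's theorem, so the corollary itself is essentially immediate.
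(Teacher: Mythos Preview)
Your proposal is correct and follows essentially the same route as the paper: the paper's argument preceding the corollary simply notes that Corollary~\ref{cor Jordan derivations from C*-algebras} supplies the missing continuity hypothesis in Johnson's theorem \cite[Theorem 6.2]{John96}, which is exactly what you do. Your explicit remark that a Banach $A$-bimodule is a Jordan Banach $A$-module under $a\circ x=\tfrac12(ax+xa)$ just makes transparent a point the paper leaves implicit.
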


Let $D: A\to X$ be an associative (resp., Jordan) derivation from
a C$^*$-algebra to a Banach $A$-bimodule. The space $X$, regarded
as a real Banach space, is a real Banach triple $A_{sa}$-module
with respect to the product defined in $(\ref{eq triple product
self-adjoint part jordan})$, where, in this case, one element in
$(a,b,c)$ is taken in $X$ and the other two in $A_{sa}$. The
restriction of $D$ to $A_{sa}$, $\delta = D|_{A_{sa}} : A_{sa} \to
X$ is a (real linear) triple derivation. Hence, Theorem \ref{cor
ternary derivations from C*-algebras} implies that $\delta$ (and
hence $D$) is continuous.  Thus: \smallskip

\begin{corollary}[Ringrose]
\label{Ringrose}
Let $A$ be a C$^*$-algebra. Then every  derivation
from $A$ into a Banach $A$-bimodule  $X$
is continuous.$\hfill\Box$
\end{corollary}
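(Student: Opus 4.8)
The plan is to derive Corollary~\ref{Ringrose} as an immediate consequence of Theorem~\ref{cor ternary derivations from C*-algebras}, by recognizing an associative derivation as a triple derivation after restricting to the self-adjoint part. Let $D: A \to X$ be an associative derivation from a C$^*$-algebra $A$ into a Banach $A$-bimodule $X$. First I would recall, as noted earlier in the paper, that $X$ becomes a real Banach triple $A_{sa}$-module via the products $\J abx_3 := \frac12(abx+xba)$ and $\J axb_2 := \frac12(axb+bxa)$ (equivalently, via the Jordan expression in $(\ref{eq triple product self-adjoint part jordan})$), where $A_{sa}$ carries its reduced real JB$^*$-triple structure.

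The key computational step is to check that $\delta := D|_{A_{sa}}: A_{sa} \to X$ is a (real-linear) triple derivation in the sense of $(\ref{eq:0409111})$. This is a direct calculation: using $D(ab) = D(a)b + aD(b)$ and $D(ba) = D(b)a + bD(a)$, one expands $\delta\J abc = \frac12 D(abc + cba)$ via the Leibniz rule applied twice to each of the two associative triple products, and collects terms to match $\J{\delta a}bc + \J a{\delta b}c + \J ab{\delta c}$. The same works if $D$ is only assumed to be a Jordan derivation, since in that case one can expand using the Jordan form of the triple product instead. Note that $D(A_{sa}) \subseteq X$ need not lie in any real-linear substructure of $X$, but that is irrelevant: $X$ as a real Banach space is the module, and $\delta$ is simply a real-linear map into it.

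Once $\delta$ is identified as a triple derivation from the C$^*$-algebra-derived reduced real JB$^*$-triple $A_{sa}$ into the Banach triple $A_{sa}$-module $X$, Theorem~\ref{cor ternary derivations from C*-algebras} applies directly and gives that $\delta$ is continuous. Since $A = A_{sa} \oplus i A_{sa}$ as a real Banach space (with the two summands complemented by the continuous maps $a \mapsto \frac12(a + a^*)$ and $a \mapsto \frac{1}{2i}(a - a^*)$), continuity of $D$ on $A_{sa}$ plus linearity forces continuity of $D$ on all of $A$. This completes the proof.

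I do not expect a genuine obstacle here: the whole content has been front-loaded into Theorem~\ref{cor ternary derivations from C*-algebras} and into the earlier observation that bimodules over associative (or Jordan) algebras are triple modules. The only point requiring care is the bookkeeping in the expansion verifying the triple derivation identity for $\delta$, and the elementary but necessary remark that passing from continuity on $A_{sa}$ to continuity on $A$ uses nothing more than the bounded real-linear decomposition of $A$ into its self-adjoint and skew-adjoint parts.
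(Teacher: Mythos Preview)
Your proposal is correct and follows essentially the same route as the paper: restrict $D$ to $A_{sa}$, regard $X$ as a real Banach triple $A_{sa}$-module, verify that $\delta = D|_{A_{sa}}$ is a triple derivation, apply Theorem~\ref{cor ternary derivations from C*-algebras}, and then pass back to $A$ via the bounded decomposition $A = A_{sa} \oplus i A_{sa}$. The paper's argument is more terse (it omits the explicit verification of the triple-derivation identity and the final decomposition step), but the strategy is identical.
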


In \cite{HaaLaust}, U. Haagerup and N.J. Laustsen presented a new
proof of Johnson's Theorem. Applying a result of automatic
continuity in \cite[Corollary 2.3]{HejNik96}, the just quoted
authors proved that every Jordan derivation from a C$^*$-algebra
$A$ to $A^*$ is bounded and hence an inner derivation (cf.
\cite[Corollary 2.5]{HaaLaust}).\smallskip

In \cite{Bre05},  M. Bre\v{s}ar studied a more general class of
Jordan derivations from a C$^*$-algebra $A$ to an $A$-bimodule
$X$. An additive mapping $d: A \to X$ satisfying $d(a\circ b) =
d(a)\circ b + a \circ d(b),$ for every $a,b\in A$, is called an
\emph{additive Jordan derivation}. An additive Jordan derivation
is said to be \emph{proper} when it is not an associative
derivation. Every (linear) Jordan derivation $D: A \to X$ is an
additive Jordan derivation. However, the reciprocal implication
is, in general, false. Actually, from \cite[Theorem 5.1]{Bre05},
for each unital C$^*$-algebra $A$, then there exists a proper
additive Jordan derivation from $A$ into some unital $A$-bimodule
if, and only if, $A$ contains an ideal of codimension one.

\medskip\medskip

\end{document}